 \newtheorem{theorem}{Theorem}[section]
 \newtheorem{lemma}[theorem]{Lemma}
 \theoremstyle{definition}
 \newtheorem{definition}[theorem]{Definition}
 \newtheorem{example}[theorem]{Example}
 \newtheorem{remark}[theorem]{Remark}
 \newtheorem{openquestion}[theorem]{Open Question}
 \newtheorem{cor}[theorem]{Corollary}
 \newtheorem{rem}[theorem]{Remark}
 \def\a{\alpha~}
 \def\l{\lambda}
 \def\n{n \times n~}
 \def\r{\hat{\rho}}
 \def\am{algebraic multiplicity~}
 \def\gm{geometric multiplicity}
 \title{\vskip70mm Some properties of ergodicity coefficients with applications in spectral graph theory}
 \author{Rachid Marsli \footnote{ Supported by grant no. SB181014
 		of King Fahd University of Petroleum and Minerals.}
 \\ Mathematics and Statistics Department\\
 King Fahd University of Petroleum and Minerals\\Dhahran, 31261\\
 Kingdom of Saudi Arabia\\ rmarsliz@kfupm.edu.sa\\
 Frank J. Hall \\ Department of Mathematics and Statistics\\ Georgia State University\\ Atlanta, GA 30303, USA\\ fhall@gsu.edu}
 \date{\today}
\begin{document}
 	%
 	%
 	%
 	%&&&&&&&&&&&&&&&&&&&&&&&&&&&&&&&&&&&&&&&&&&&&&&&&&&&&&&&&&&&&&&&&&&&&&&&&&&&&&&&&&&&&&&&&&&&&&&&&&&&&&&&&&&
 	\maketitle
 	\begin{abstract}
 	The main result is Corollary \ref{cc2} which provides upper bounds on, and even better, approximates the largest non-trivial eigenvalue in absolute value of real constant row-sum matrices by the use of vector norm based ergodicity coefficients $\{\tau_p\}$. If the constant row-sum matrix is nonsingular, then it is also shown how its smallest non-trivial eigenvalue in absolute value can be bounded by using $\{\tau_p\}$.
 	In the last section, these two results are applied to bound the spectral radius of the Laplacian matrix as well as the algebraic connectivity of its associated graph.
 	Many other results are obtained. In particular,
 	Theorem \ref{t5} is a convergence theorem for $\tau_p$ and Theorem \ref{t10} says that $\tau_1$ is less than or equal to $\tau_{\infty}$ for the Laplacian matrix of every simple graph.
 	 Other discussions, open questions and examples are provided.
  	\end{abstract}
 	\noindent
 	{\it AMS Subj. Class.:} 15A18; 15B51 
 	\vskip2mm
 	\noindent
 	{\it Keywords:} ergodicity coefficients, eigenvalues, stochastic matrices, e-matrices, Laplacian matrix
 	%
 	%
 	%
 	%&&&&&&&&&&&&&&&&&&&&&&&&&&&&&&&&&&&&&&&&&&&&&&&&&&&&&&&&&&&&&&&&&&&&&&&&&&&&&&&&&&&&&&&&&&&&&&&&&&&&&&&&&&
 	   \begin{section}{Introduction}
 		Let $A$ be a matrix in $M_n(\mathbb{R})$, the set of $\n$ real matrices and let $v$ be a vector in $\mathbb{C}^n$.
 		Let $||v||_p$ be the $l_p$-norm of the vector $v$ with $p\in \mathbb{N} \cup \{\infty\}$.
 		A class of functions mapping $M_n(\mathbb{R})$ to $R^+ \cup\{0\}$ arises from the above vector-norms as follows:
 		\begin{equation}\label{F22}
 		\tau_p(A) = \underset{\underset{\underset{||x||_p =1}{x^T e =0}}{x\in \mathbb{R}^n}}{\max} ||A^T x||_p,
 		\end{equation}
 		where $e$ is the all $1$'s vector of $n$ components.      
 		These functions turn out to be very useful in the study of the asymptotic behavior of Markov chains \cite{Se}.
 		They form a class of the so-called ergodicity coefficients of stochastic matrices and have been the subject of interesting research. Some of the most   
        important results obtained about them are collected in
 		a comprehensive and informative 2011 work \cite{IS} by I. Ipsen and T. Selee. 
 		Stochastic matrices are emphasized in Sections 1 - 5 of this survey \cite{IS}, while extensions of these functions to general rectangular real and complex 
 		matrices are considered in Sections 6 and 7. We first summarize a few of the many important properties of these functions as given in \cite{IS}. 
                For the first theorem, see \cite[Theorem 5.1]{IS} and the references therein.
 		
 		\begin{theorem} \label{tt1}
 		Let $S, S_1$ and $S_2$ be $n\times n$ stochastic matrices, and let $||S||_p$ be the $p$-matrix norm of $S$.  
 			\begin{enumerate}
 				\item $0 \leq \tau_p(S) \leq ||S^T||_p$,\\
 				\item $|\tau_p(S_1) - \tau_p(S_2)| \leq \tau_p (S_1 - S_2)$,\\
 				\item $\tau_p(S) = 0$ if and only if $rank(S) = 1$,
 				\item $ \tau_p(S_1\,S_2) \leq \tau_p(S_1)\tau_p(S_2),$
 				\item If $S$ is irreducible and $1$ is the only eigenvalue of modulus $1$, then $|\l| \leq \tau_p (S)$ for all eigenvalues $\l \neq 1$  
                                      of $S$. 
 			\end{enumerate}       
 		\end{theorem}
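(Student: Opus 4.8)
The key structural fact I would record first is that $\tau_p$ is, up to the transpose, the $\ell_p$ operator norm of a matrix restricted to the hyperplane $e^{\perp} := \{x \in \mathbb{R}^n : x^T e = 0\}$, and that for a stochastic matrix $S$ the identity $Se = e$ forces $S^T$ to leave $e^{\perp}$ invariant, since $(S^T x)^T e = x^T S e = x^T e$ for every $x$. With this in hand, parts (1)--(3) are immediate. For (1), the lower bound holds because $\tau_p(S)$ is a maximum of nonnegative quantities over the nonempty set $e^{\perp}\cap\{\|x\|_p=1\}$ (for $n\ge2$), and the upper bound follows from $\tau_p(S)=\max_{x\in e^{\perp},\,\|x\|_p=1}\|S^Tx\|_p\le\max_{\|x\|_p=1}\|S^Tx\|_p=\|S^T\|_p$. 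For (3), $\tau_p(S)=0$ says exactly that $S^T$ annihilates the $(n-1)$-dimensional space $e^{\perp}$, i.e. $\mathrm{rank}(S^T)=\mathrm{rank}(S)\le1$; since a stochastic matrix is nonzero this is equivalent to $\mathrm{rank}(S)=1$, and conversely a rank-one stochastic matrix is $S=ew^T$, for which $S^Tx=w(x^Te)=0$ on $e^{\perp}$.

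For (2), fix any admissible $x$ (so $x\in e^{\perp}$, $\|x\|_p=1$) and apply the triangle inequality to $S_1^Tx=S_2^Tx+(S_1-S_2)^Tx$: since $x$ is admissible for all three coefficient matrices, $\|S_1^Tx\|_p\le\|S_2^Tx\|_p+\|(S_1-S_2)^Tx\|_p\le\tau_p(S_2)+\tau_p(S_1-S_2)$. Taking the maximum over $x$ and using $\tau_p(-A)=\tau_p(A)$ gives both $\tau_p(S_1)-\tau_p(S_2)\le\tau_p(S_1-S_2)$ and its mirror image, which together are (2). For the submultiplicativity (4) the invariance of $e^{\perp}$ does the real work: for admissible $x$ put $y=S_1^Tx$. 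Then $y\in e^{\perp}$ and $\|y\|_p\le\tau_p(S_1)$, so that $\|(S_1S_2)^Tx\|_p=\|S_2^T y\|_p\le\tau_p(S_2)\|y\|_p\le\tau_p(S_1)\tau_p(S_2)$; maximizing over $x$ yields (4).

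The genuinely delicate statement is (5). The naive argument handles real eigenvalues cleanly: if $\lambda\ne1$ is real with left eigenvector $x$ (so $S^Tx=\lambda x$), then comparing $x^TSe=\lambda x^Te$ with $x^TSe=x^Te$ gives $(1-\lambda)x^Te=0$, hence $x\in e^{\perp}$ and $|\lambda|=\|S^Tx\|_p/\|x\|_p\le\tau_p(S)$. The obstacle is that a complex eigenvalue has a complex eigenvector, which the real definition of $\tau_p$ does not see, so this direct computation does not close the complex case. I would instead route around it through part (4) and Gelfand's formula. Perron--Frobenius (using irreducibility) makes $1$ a simple eigenvalue, so if $\pi$ denotes the stationary vector ($S^T\pi=\pi$, $\pi^Te=1$) then $\mathbb{R}^n=e^{\perp}\oplus\mathrm{span}(\pi)$ is an $S^T$-invariant splitting and the spectrum of the restriction $T:=S^T|_{e^{\perp}}$ is precisely the set of eigenvalues of $S$ other than $1$. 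Because $(S^k)^T=(S^T)^k$ preserves $e^{\perp}$, one has $\tau_p(S^k)=\|T^k\|$ in the $\ell_p$ operator norm on $e^{\perp}$, so Gelfand's spectral radius formula combined with (4) gives
\[
\max_{\lambda\ne1}|\lambda|=\rho(T)=\lim_{k\to\infty}\tau_p(S^k)^{1/k}\le\lim_{k\to\infty}\bigl(\tau_p(S)^k\bigr)^{1/k}=\tau_p(S),
\]
which is exactly (5). I expect this last step --- recognizing $\tau_p$ as an honest operator norm on the invariant hyperplane and invoking Gelfand to absorb the complex eigenvalues --- to be the crux; the hypothesis that $1$ is the only eigenvalue of modulus $1$ is not needed for the inequality itself but guarantees that the resulting bound reflects a genuine spectral gap.
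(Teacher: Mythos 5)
Your proof is correct, but a caveat on the comparison: the paper never proves Theorem \ref{tt1} itself --- it quotes it from the Ipsen--Selee survey --- so the fair comparison is with the arguments the paper develops later for its e-matrix generalizations. For parts (1)--(4) you are essentially on the paper's track: your key observation that $S^T$ leaves the hyperplane $e^{\perp}$ invariant (because $(S^Tx)^Te = x^TSe = x^Te$) is exactly the mechanism in the paper's reproduction of the submultiplicativity proof in Section 2.2, where $x = A^Ty/\|A^Ty\|_p$ is admissible precisely because $A$ has $e$ as eigenvector, and (1)--(3) are the same routine verifications (your rank-one characterization $S = ew^T$ included). Where you genuinely diverge is (5). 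The paper's route (Theorem \ref{tt11}, following Rothblum and Tan) attacks the complex left eigenvector $v = x + iy$ head-on by introducing the rotation norm $N_p(v) = \max_{\theta}\|\cos\theta\,x - \sin\theta\,y\|_p$ and combining it with the orthogonality $v^Te = 0$ (Lemma \ref{ll3}); this gives $|\lambda| \le \tau_p(S)$ in one step, for \emph{every} stochastic --- indeed every constant row-sum --- matrix, with no irreducibility or aperiodicity hypothesis. You instead absorb the complex eigenvalues by recognizing $\tau_p(S^k)$ as the $\ell_p$ operator norm of $T^k$ on the invariant hyperplane and invoking Gelfand's formula together with (4). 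Your argument is sound: the splitting $\mathbb{R}^n = e^{\perp} \oplus \mathrm{span}(\pi)$ is legitimate since $\pi^Te = 1 \ne 0$, Perron--Frobenius simplicity of the eigenvalue $1$ identifies $\sigma(T)$ with the non-unit spectrum, the spectral radius formula is valid for any norm on a finite-dimensional space, and $\tau_p(S^k) \le \tau_p(S)^k$ follows by iterating (4); your closing remark that the modulus-one hypothesis is superfluous is confirmed by the paper's Theorem \ref{tt9}. What each approach buys: the rotation-norm computation is more elementary (no limits, no Perron--Frobenius, and it even dispenses with irreducibility, which your route still uses to make $1$ simple --- though one can note that every eigenvalue $\lambda \ne 1$ has a left eigenvector in $e^{\perp}$ and so lies in $\sigma(T)$ regardless), while your operator-norm-plus-Gelfand viewpoint is more structural and in fact anticipates the paper's own convergence result, Theorem \ref{t5}, whose proof sandwiches $\big(\tau_p(A^k)\big)^{1/k}$ between the eigenvalue bound and $\|(A^T)^k\|_p^{1/k}$ in precisely your spirit.
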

 	
 		The last statement in Theorem \ref{tt1} says that when $S$ is restricted as indicated, an important application of the function $\tau_p$  
 		is that it provides an upper-bound for the absolute value of the subdominant eigenvalue of the stochastic matrix.  When $p = 1$ or 
 		$p = \infty$, the result holds for all stochastic matrices.
 		
 		\begin{theorem} \label{tt2}
 			Let $S$ be a stochastic matrix and suppose that $\l \neq 1$ is an eigenvalue of $S$. Then
 			\begin{equation}\label{F27}
			|\l| \leq \tau_1(S)
 			\end{equation}
 			and
 			\begin{equation}\label{F28}
 			|\l| \leq \tau_{\infty}(S)
 			\end{equation} 			 		 
 		\end{theorem}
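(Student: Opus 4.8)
The plan is to exhibit, for any eigenvalue $\lambda \neq 1$ of $S$, a concrete real vector in the feasible set of the definition \eqref{F22} whose image under $S^T$ has $\ell_p$-norm at least $|\lambda|$ times its own, which forces $\tau_p(S) \ge |\lambda|$. The first step is to pass from right to left eigenvectors. I would take a (possibly complex) vector $w \in \mathbb{C}^n$ with $w^T S = \lambda w^T$, equivalently $S^T w = \lambda w$. Using $Se = e$ gives $w^T S e = \lambda w^T e$, i.e. $(1-\lambda)\,w^T e = 0$, and since $\lambda \neq 1$ this forces $w^T e = 0$. Writing $w = a + ib$ with $a,b \in \mathbb{R}^n$ then yields $a^T e = b^T e = 0$, so both the real and imaginary parts lie in the admissible hyperplane $\{x : x^T e = 0\}$.

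The heart of the argument is a rotation device. Writing $\lambda = |\lambda|\,e^{i\psi}$, I would set $x_\phi := \operatorname{Re}(e^{i\phi} w) = \cos\phi\,a - \sin\phi\,b$, which is a real vector orthogonal to $e$ for every real $\phi$. Taking real parts in the identity $S^T(e^{i\phi} w) = e^{i\phi}\lambda w$ and using that $S^T$ is a real matrix gives $S^T x_\phi = |\lambda|\,x_{\phi+\psi}$. Hence $\|S^T x_\phi\|_p = |\lambda|\,\|x_{\phi+\psi}\|_p$, and whenever $x_\phi \neq 0$ the definition \eqref{F22} yields $\tau_p(S) \ge |\lambda|\,\|x_{\phi+\psi}\|_p / \|x_\phi\|_p$. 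It then remains only to select $\phi$ making this ratio at least $1$.

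To close the argument I would split into two cases. If $\lambda$ is real, take $b = 0$, so $S^T a = \lambda a$ with $0 \neq a \perp e$, giving $\tau_p(S) \ge |\lambda|$ at once. If $\lambda$ is non-real, then $a$ and $b$ are linearly independent (the standard fact for real and imaginary parts of an eigenvector of a real matrix at a non-real eigenvalue), so $g(\phi) := \|x_\phi\|_p$ is continuous, $2\pi$-periodic, and strictly positive. Because $\int_0^{2\pi}\bigl[g(\phi+\psi) - g(\phi)\bigr]\,d\phi = 0$, this continuous periodic integrand cannot be everywhere negative, so some $\phi_0$ satisfies $g(\phi_0+\psi) \ge g(\phi_0)$; for that $\phi_0$ we get $\|S^T x_{\phi_0}\|_p \ge |\lambda|\,\|x_{\phi_0}\|_p$ and hence $|\lambda| \le \tau_p(S)$, in particular for $p = 1$ and $p = \infty$.

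The main obstacle is precisely the mismatch between a complex eigenpair $(\lambda, w)$ and the real feasible set appearing in \eqref{F22}: one cannot simply substitute $w$ into $\tau_p$, and a crude complex bound only recovers $\|S^T\|_p \ge |\lambda|$ rather than the sharper $\tau_p(S) \ge |\lambda|$. The rotation-and-averaging step is what overcomes this. I would remark that the device is insensitive to the value of $p$, so the same reasoning in fact delivers the bound for every $p$; for the two cases $p = 1$ and $p = \infty$ stated here one could alternatively appeal to the classical Dobrushin-type closed-form expressions for $\tau_1$ and $\tau_\infty$.
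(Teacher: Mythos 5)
Your proof is correct and is essentially the paper's own argument: the paper establishes Theorem~\ref{tt2} via the general Theorem~\ref{tt11} (following Rothblum and Tan), using exactly your rotation device --- a left eigenvector $v=x+iy$ with $\lambda\neq 1$ is orthogonal to $e$, and the real vectors $\cos\theta\,x-\sin\theta\,y=\operatorname{Re}(e^{i\theta}v)$ are fed into the definition of $\tau_p$, yielding $|\lambda|\leq\tau_p(S)$ for every $p$. The only difference is in how the good angle is selected: where you average $g(\phi+\psi)-g(\phi)$ over a period and invoke continuity, the paper packages the rotation into the norm $N_p(v)=\max_{\theta}\|\cos\theta\,x-\sin\theta\,y\|_p$ and uses its homogeneity $N_p(\lambda v)=|\lambda|N_p(v)$, which amounts to taking $\phi_0$ so that $\phi_0+\psi$ maximizes $g$ --- a cosmetic, not substantive, variation.
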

 		The bound given by (\ref{F28}) is contained in \cite[Theorem 4.3]{IS}, where the authors attribute it to E. Senata \cite{Se1};
 		for (\ref{F27}) see \cite[Theorem 3.6]{IS} and the included references.\\
 		
 		Explicit forms are known for $\tau_1(S)$ and $\tau_{\infty}(S)$  with $S$ being stochastic. For $\tau_1(S)$, see \cite{Se1}.
 		\begin{theorem}\label{tt3}
 		Let $S=[s_{ij}]$ be an $\n$ stochastic matrix. Then
 		\begin{equation}\label{F26a}
 		\tau_1(S) = \frac{1}{2} \underset{i,j}{\max}\sum_{k=1}^n |s_{ik} - s_{jk}|
 		= 1- \underset{i,j}{\min}\sum_{k=1}^n \min\{s_{ik} , s_{jk}\}.
 		\end{equation} 			
 		\end{theorem}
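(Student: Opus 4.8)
The plan is to read off $\tau_1(S)$ directly from the definition~(\ref{F22}) with $p=1$ and $A=S$, namely
$$\tau_1(S) = \max\{\, ||S^T x||_1 : x\in\mathbb{R}^n,\ x^T e = 0,\ ||x||_1 = 1 \,\},$$
and to reduce this optimization to the finitely many ``elementary'' directions $\tfrac12(e_i-e_j)$, where $e_i$ denotes the $i$-th standard basis vector. The underlying reason is that $x\mapsto ||S^T x||_1$ is a convex function (a norm composed with a linear map), so on a polytope its maximum is attained at an extreme point, and the extreme points of the compact convex set $\{x : x^Te=0,\ ||x||_1\le 1\}$ turn out to be precisely the vectors $\tfrac12(e_i-e_j)$ with $i\neq j$. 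Relaxing $||x||_1=1$ to $||x||_1\le 1$ does not change the maximum, since the maximizing extreme points already satisfy $||x||_1=1$.

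Rather than invoke extreme-point machinery, I would establish this reduction by hand, which I expect to be the crux of the argument. Given a feasible $x$, split it into positive and negative parts $x=x^+-x^-$. The constraint $x^Te=0$ forces $\sum_k x_k^+ = \sum_k x_k^-$, and together with $||x||_1=1$ this yields $\sum_k x_k^+ = \sum_k x_k^- = \tfrac12$. Hence $p:=2x^+$ and $q:=2x^-$ are probability vectors with $x=\tfrac12(p-q)$. Writing $p=\sum_i p_i e_i$ and $q=\sum_j q_j e_j$ and using $\sum_i p_i=\sum_j q_j=1$, one obtains the convex decomposition
$$x=\tfrac12(p-q)=\sum_{i,j} p_i q_j\,\tfrac12(e_i-e_j),\qquad p_iq_j\ge 0,\ \ \sum_{i,j}p_iq_j=1.$$
Convexity of $||\cdot||_1$ then gives $||S^T x||_1\le \max_{i,j} ||S^T\tfrac12(e_i-e_j)||_1$. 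Since each $\tfrac12(e_i-e_j)$ with $i\neq j$ is itself feasible, the bound is attained, and therefore
$$\tau_1(S)=\frac12\max_{i,j}\,||S^T(e_i-e_j)||_1 = \frac12\max_{i,j}\sum_{k=1}^n |s_{ik}-s_{jk}|,$$
where the last step uses $(S^T(e_i-e_j))_k = s_{ik}-s_{jk}$ and the observation that the terms with $i=j$ contribute $0$. This establishes the first equality.

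For the second equality I would apply the elementary identity $|a-b| = a+b-2\min\{a,b\}$, which is valid because all entries $s_{ik},s_{jk}$ are nonnegative. Summing over $k$ and invoking the stochasticity of $S$ (each row sums to $1$) gives $\sum_k |s_{ik}-s_{jk}| = 2 - 2\sum_k \min\{s_{ik},s_{jk}\}$, hence $\tfrac12\sum_k|s_{ik}-s_{jk}| = 1 - \sum_k\min\{s_{ik},s_{jk}\}$. Taking the maximum over $i,j$ converts the maximum of $1-\sum_k\min\{s_{ik},s_{jk}\}$ into $1-\min_{i,j}\sum_k\min\{s_{ik},s_{jk}\}$, which is the claimed formula. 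Everything after the reduction to the directions $\tfrac12(e_i-e_j)$ is routine bookkeeping; the only genuinely substantive step is that reduction.
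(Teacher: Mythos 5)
Your proof is correct and complete. Note, though, that the paper itself contains no proof of Theorem \ref{tt3}: it simply quotes the explicit form of $\tau_1$ from Seneta \cite{Se1} (see also \cite[Theorem 3.6]{IS}), so there is no in-paper argument to compare against. Your reduction of the maximization of the convex map $x \mapsto \|S^T x\|_1$ over $\{x:\ x^T e = 0,\ \|x\|_1 = 1\}$ to the directions $\tfrac12(e_i - e_j)$, via the explicit convex decomposition $x = \tfrac12(p-q) = \sum_{i,j} p_i q_j \,\tfrac12(e_i - e_j)$ with probability vectors $p = 2x^+$, $q = 2x^-$, is in substance the classical argument from the cited literature, carried out in a self-contained way: the extreme-point remark serves only as motivation and the hand decomposition does all the work, with attainment settled because each $\tfrac12(e_i - e_j)$, $i \neq j$, is feasible. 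Two harmless inaccuracies are worth flagging. First, the identity $|a-b| = a + b - 2\min\{a,b\}$ holds for \emph{all} real $a,b$ (it follows from $\max\{a,b\} + \min\{a,b\} = a+b$ and $\max\{a,b\} - \min\{a,b\} = |a-b|$), so nonnegativity of the entries is not actually needed at that step --- only row-stochasticity is used, to evaluate $\sum_k s_{ik} = \sum_k s_{jk} = 1$. Second, the diagonal pairs $i=j$ contribute $0$ to the max in the first expression of (\ref{F26a}) and the value $1$ to the inner sum in the second, so including them changes neither side; this is consistent, as you implicitly observe. Your proof also tacitly assumes $n \geq 2$ (for $n=1$ the feasible set in (\ref{F22}) is empty), a degenerate case the paper likewise ignores.
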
 		
 		For an explicit formula of $\tau_{\infty}$, we need the following definition.
 		
 		\begin{definition}\cite[Definition 2.3]{HM4} \label{d1}
 			Let $A= [a_{ij}]$ be an $\n$ real matrix.\\
 			Let $b_{1j}\geq \dots \geq b_{nj}$ be an arrangement in non-increasing order of 
 			$a_{1j}, \dots, a_{nj}$.
 			Then, for $j=1, \dots, n,~$ define $cs_j(A)$ in the following manner:\\
 			\hskip10mm $(1) ~~~ cs_j(A) =
 			\big(b_{1j}+\dots+b_{\frac{n-1}{2},j}\big)-\big(b_{\frac{n+3}{2},j}+\dots+b_{nj}\big),~~$
 			if $n$ is odd.\\
 			\hskip10mm $(2) ~~~ cs_j(A) = 
 			\big(b_{1j}+\dots+b_{\frac{n}{2},j}\big)-\big(b_{1+\frac{n}{2},j}+\dots+b_{nj}\big),~~$
 			if $n$ is even.\\
 			We define $\r(A)$ by:
 			$$ \r(A) = \underset{1\leq j \leq n}{\text{max}}\{cs_j(A)\}.$$
 		\end{definition}
 		
 		\begin{rem}\label{r2}
 			This definition of $cs_j(A)$ is equivalent to the one given in \cite[Lemma 3.3]{IS},
 			where, $cs_j(A)$ is denoted $\phi(A e_j)$, with $e_j$  the $j^{th}$ canonical vector. 
 		\end{rem}
 		
 		\begin{theorem} \cite[Theorem 4.2]{IS} \label{tt4}
 			Let $S	$ be an $\n$ stochastic matrix. Then
 			\begin{equation}\label{F26b}
 			\tau_{\infty}(S) = \r(S).	
 			\end{equation} 			 				
 		\end{theorem}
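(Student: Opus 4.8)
The plan is to reduce the definition of $\tau_\infty$ to a family of linear programs indexed by the columns of $S$, and then to evaluate each one explicitly. Writing $c_1,\dots,c_n$ for the columns of $S$ (so that the $k$-th entry of $c_j$ is $s_{kj}$), one has $(S^T x)_j = c_j^T x$, and hence $\|S^T x\|_\infty = \max_j |c_j^T x|$. Since the maximum over $x$ and the maximum over $j$ commute,
$$ \tau_\infty(S) = \max_{1 \le j \le n}\ \max_{\substack{x^T e = 0 \\ \|x\|_\infty = 1}} |c_j^T x|. $$
Because the feasible set $P = \{x : x^T e = 0,\ \|x\|_\infty \le 1\}$ is symmetric under $x \mapsto -x$ and the objective is linear, passing from the sphere to the ball and dropping the absolute value is harmless: the inner maximum equals $\max_{x \in P} c_j^T x$. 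Thus it suffices to show, for each fixed column $c = c_j$, that
$$ \max\{\, c^T x : e^T x = 0,\ -e \le x \le e \,\} = cs_j(S). $$

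First I would establish the lower bound $\ge cs_j(S)$ by exhibiting an explicit feasible vector: sort the entries of $c$ as $b_{1j} \ge \dots \ge b_{nj}$ and take $x$ equal to $+1$ on the indices of the top half, $-1$ on the bottom half, and (when $n$ is odd) $0$ on the single middle index. This $x$ satisfies $e^T x = 0$ and $\|x\|_\infty = 1$, and by construction $c^T x$ is exactly the top-half-minus-bottom-half expression defining $cs_j(S)$ in Definition \ref{d1}.

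For the matching upper bound I would invoke linear-programming duality. The program is feasible (take $x = 0$) and its feasible set is compact, so strong duality holds; a short computation shows the dual is
$$ \min_{\mu \in \mathbb{R}} \sum_{k=1}^n |s_{kj} - \mu|, $$
whose optimal $\mu$ is a median of the column entries. Evaluating this minimum, the sum of absolute deviations from the median, reproduces precisely $cs_j(S)$: in the even case any $\mu$ lying between the two middle order statistics is optimal and yields the sum of the top half minus the sum of the bottom half, while in the odd case the median $b_{\frac{n+1}{2},j}$ yields the top-half-minus-bottom-half with the middle term cancelling. Taking the maximum over $j$ then gives $\tau_\infty(S) = \max_j cs_j(S) = \hat{\rho}(S)$.

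The main obstacle is bookkeeping rather than conceptual: I must treat the parity of $n$ carefully so that the median computation lines up exactly with the index ranges in Definition \ref{d1}, and I must justify the two reductions at the outset, namely commuting the two maxima and moving from the unit sphere with the absolute value to the linear program over the cube-slice $P$. An alternative to duality for the upper bound is a direct exchange argument, showing that any feasible $x$ can be driven to the sorted $\pm 1$ vertex without decreasing $c^T x$; this avoids duality but requires extra care to preserve the constraint $e^T x = 0$ at each step.
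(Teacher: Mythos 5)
Your proof is correct, but there is nothing in the paper to compare it against: Theorem \ref{tt4} is stated without proof, quoted from \cite[Theorem 4.2]{IS} and there attributed to C.P. Tan \cite{Tan}, so your argument is necessarily an independent route rather than a variant of one in the text. The route itself is sound. Commuting the two maxima is legitimate, and the passage from $\{\|x\|_\infty=1,\ x^Te=0\}$ to the slice $P=\{\|x\|_\infty\le 1,\ x^Te=0\}$ is easily justified: either note that every vertex of $P$ has all coordinates equal to $\pm1$ when $n$ is even, or all but one coordinate equal to $\pm1$ and the remaining one equal to $0$ when $n$ is odd (so all vertices lie on the sphere), or argue by scaling, since the constraint $x^Te=0$ is homogeneous and a maximizer with $\|x\|_\infty<1$ and positive objective can be pushed outward. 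The duality computation is also right: introducing a multiplier $\mu$ for $e^Tx=0$ and optimizing over the box $[-1,1]^n$ yields the dual $\min_{\mu\in\mathbb{R}}\sum_{k=1}^n|s_{kj}-\mu|$. The one economy you missed is that this dual objective is exactly the function $f$ of Theorem \ref{t3} with $\beta_k=b_{kj}$, and Theorem \ref{t3} already asserts, in both parities, that its minimum equals the top-half-minus-bottom-half expression $cs_j(S)$, attained at the median; invoking it disposes of all the parity bookkeeping you flag in your last paragraph. It also explains structurally why $\tau_\infty$ and $\hat{\rho}$ coincide: the paper's earlier eigenvalue bound $|\lambda|\le\hat{\rho}(S)$ (Theorem \ref{b2}, from \cite{HM4}) was derived from that same minimization lemma, and your sorted $\pm1$ vector is precisely the primal certificate matching the dual median, so the two optimal values agree by complementary slackness.
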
 		
 		
 		As mentioned in \cite{IS}, this important result goes back to C.P. Tan \cite{Tan}.
 		
 		One of the topics of this paper is to show that the set of real constant row-sum matrices provides  a better theoretical framework for the study of 
                $\{\tau_p  ~|~ p \in \mathbb{N}\cap \{\infty\}\}$ compared to the set of stochastic matrices. 
	 		In the sequel and for convenience,
	 		real $\n$ constant row-sum matrices are simply called e-matrices; we shall denote the set of all such matrices by $E_n$.
 		If $A$ is an e-matrix, then it has an eigenvalue $\lambda_A$ equal to its constant row-sum. We call such eigenvalue, the trivial eigenvalue of 
 		$A$, and we call every eigenvalue of $A$ distinct from $\l_A$ a non-trivial eigenvalue of $A$.
 		The all 1's vector with $n$ components, which we denote $e$, is an eigenvector of $A$ associated with its trivial eigenvalue $\lambda_A$. 
 		The $n \times n$ all 1's matrix is denoted $J$.
 		There are many reasons behind our thinking that it is better to study the functions $\tau_p$ within $E_n$.
 		
 		\begin{enumerate}
 			\item First, these functions are well defined for all real matrices including e-matrices.
 			\item As we shall explain, it is not difficult to see that the functions $\tau_p$  are semi-norms on $E_n$,
 			 which is a  vector subspace of $M_n(\mathbb{R})$, closed under matrix multiplication.
 			\item Besides stochastic matrices, there are other important types of e-matrices such as Laplacian and circulant matrices.
 			\item Important properties of $\{\tau_p\}$ such as those given by Theorems \ref{tt2}, \ref{tt3} and \ref{tt4} can be extended over $E_n$.
 			%This can be done easily by assuming, in the proofs of those theorems, that $S$ is an e-matrix instead of just being stochastic.
 			\item The set of nonsingular matrices in $E_n$ is closed under matrix inversion. This important property of $E_n$ is explored in the last two sections of this work.
 		\end{enumerate}
 	
 			In  our paper \cite{HM4}, published in $2019$, without being aware of the explicit form of $\tau_{\infty}$ (Theorem \ref{tt4}), we obtained in a 
 			totally different manner the same result implied by the combination of Theorems \ref{tt2} and \ref{tt4},
 			that if  $\l \neq 1$ is an eigenvalue of the stochastic matrix $S$, then $|\l| \leq \r(S)$. 	 		
 		Our approach was based on the use of the following known optimization theorem, a form of
 		which was introduced in the study of the location of eigenvalues of real matrices,
 		for the first time (to the best of our knowledge), by Barany and Solymosi in their $2017$ paper \cite{BS}.
 		
 		\begin{theorem}\label{t3}
 			Consider the real function of the real variable
 			$f(x) = \displaystyle{\sum_{i=1}^n |x - \beta_i|}\, ,$ with $ \beta_1 \, \geq \,  ... \, \geq \, \beta_n, \ \ $ 
 			not necessarily distinct, $n$ real numbers. 
 			\begin{enumerate}
 				\item If $\,n\,$ is odd, then $ \ \ \displaystyle{ \min_{x\in \mathbb{R}}\, f(x) \ \ =
 					\ \ \big( \beta_1 + ... + \beta_{\frac{n-1}{2}} \big) \, - \, \big( \beta_{\frac{n+3}{2}} + ... + \beta_{n} \big)}. \ \ $ 
 				This minimum is reached when $ \ \ x \,  = \, \beta_{\frac{n+1}{2}} . $
 				\item If $\,n\,$ is even, then $\ \ \displaystyle{ \min_{x\in \mathbb{R}}\, f(x) \ \ = 
 					\ \ \big( \beta_1 + ... + \beta_{\frac{n}{2}} \big) \, - \, \big( \beta_{\frac{n}{2}+1} + ... + \beta_{n} \big)}. \ \ $
 				This takes place for every $ \ \ x \,  \in  \, [\beta_{\frac{n}{2}} \ \ , \ \ \beta_{\frac{n}{2}+1}] \ \  $ 
 				if $ \ \ \beta_{\frac{n}{2}} \, \not= \, \beta_{\frac{n}{2}+1}\ \ $ and only for $ \ \ x = \beta_{\frac{n}{2} \ \ }$ if
 				$ \ \ \beta_{\frac{n}{2}}  =  \beta_{\frac{n}{2}+1}\, $.
 			\end{enumerate}
 		\end{theorem}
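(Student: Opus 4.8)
The plan is to exploit the convexity and piecewise-linear structure of $f$ together with an elementary pairing argument. First, observe that $f$ is a finite sum of convex functions and is therefore convex, so any local minimum is global and the set of minimizers is an interval; this already explains why in the even case one should expect a whole interval of minimizers rather than a single point. To pin down the exact value and location I would pair the terms symmetrically: for $j$ in the range $1 \le j \le \lfloor n/2 \rfloor$, group the term $|x-\beta_j|$ with $|x-\beta_{n+1-j}|$. Since the $\beta_i$ are arranged in non-increasing order we have $\beta_j \ge \beta_{n+1-j}$, and the reverse triangle inequality yields
\begin{equation*}
|x-\beta_j| + |x-\beta_{n+1-j}| \ \ge\ \beta_j - \beta_{n+1-j},
\end{equation*}
with equality precisely when $\beta_{n+1-j} \le x \le \beta_j$.

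Summing these inequalities over all pairs produces the claimed lower bound. In the even case $n = 2m$, every term is paired, so $f(x) \ge \sum_{j=1}^m (\beta_j - \beta_{n+1-j}) = (\beta_1 + \cdots + \beta_m) - (\beta_{m+1} + \cdots + \beta_n)$, which is exactly the quantity in statement (2). The equality conditions $\beta_{n+1-j} \le x \le \beta_j$ are nested: as $j$ decreases from $m$ to $1$ the interval $[\beta_{n+1-j}, \beta_j]$ only widens, so the simultaneous equality set is the innermost interval $[\beta_{m+1}, \beta_m] = [\beta_{n/2+1}, \beta_{n/2}]$, which collapses to the single point $\beta_{n/2}$ when $\beta_{n/2} = \beta_{n/2+1}$. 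In the odd case $n = 2m+1$ the same pairing leaves the middle term $|x-\beta_{m+1}|$ unpaired, giving $f(x) \ge |x-\beta_{m+1}| + \sum_{j=1}^m (\beta_j - \beta_{n+1-j})$; the right-hand side is minimized only at $x = \beta_{m+1} = \beta_{(n+1)/2}$, where the unpaired term vanishes and, since $\beta_{n+1-j} \le \beta_{m+1} \le \beta_j$ for every $j \le m$, all the paired inequalities are simultaneously tight. Evaluating the bound then gives $(\beta_1 + \cdots + \beta_m) - (\beta_{m+2} + \cdots + \beta_n)$, matching statement (1).

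To finish I would verify that the lower bounds are attained, which the equality analysis above already supplies: in each case $f$ evaluated at the stated minimizer equals the derived bound, so that bound is the true minimum. The main point requiring care is the even case, where one must argue that the attainment set is \emph{exactly} the closed interval $[\beta_{n/2+1}, \beta_{n/2}]$ and nothing larger; this follows from the nestedness of the pairwise equality intervals, together with convexity, which rules out additional minimizers outside this interval. As an alternative route I could instead differentiate: away from the points $\beta_i$ the slope of $f$ equals the number of $\beta_i$ lying below $x$ minus the number lying above, and this slope jumps by $+2$ each time $x$ crosses a $\beta_i$, so the minimum occurs where the slope changes sign, i.e. at the median, recovering the same answer. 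I expect the pairing argument to be the cleaner of the two, since it delivers both the optimal value and the exact minimizer set in a single computation.
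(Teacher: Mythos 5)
Your pairing proof is correct and complete. Note first that the paper itself contains no proof of Theorem \ref{t3}: it is quoted as a known optimization result, a form of which the authors attribute to Barany and Solymosi \cite{BS}, so there is no internal argument to compare against; your write-up would in fact supply a self-contained proof the paper omits. The core step, $|x-\beta_j|+|x-\beta_{n+1-j}| \ge \beta_j-\beta_{n+1-j}$ with equality exactly when $\beta_{n+1-j}\le x\le \beta_j$, summed over the pairs, handles both parity cases, and the nestedness observation correctly identifies the equality set as the innermost interval $[\beta_{\frac{n}{2}+1},\beta_{\frac{n}{2}}]$ (the theorem writes this interval with the endpoints in the reverse order, but it is the same set, degenerating to a point when $\beta_{\frac{n}{2}}=\beta_{\frac{n}{2}+1}$). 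Two small refinements: in the even case you invoke convexity to rule out minimizers outside the innermost interval, but this is unnecessary --- if $x\notin[\beta_{\frac{n}{2}+1},\beta_{\frac{n}{2}}]$ then the $j=\frac{n}{2}$ pair inequality is already strict, so $f(x)$ strictly exceeds the bound, and the pairing argument alone pins down the attainment set exactly. And in the odd case your bound $f(x)\ge |x-\beta_{\frac{n+1}{2}}| + \big(\beta_1+\cdots+\beta_{\frac{n-1}{2}}\big)-\big(\beta_{\frac{n+3}{2}}+\cdots+\beta_n\big)$, with the paired terms all tight at $x=\beta_{\frac{n+1}{2}}$, actually proves the minimizer is \emph{unique}, which is slightly stronger than the statement's claim that the minimum ``is reached when'' $x=\beta_{\frac{n+1}{2}}$. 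The alternative slope-counting (median) argument you sketch would also work and is the more common textbook route, but as you say the pairing decomposition delivers the optimal value and the exact minimizer set in one stroke.
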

 		
 		We were able to show the following results in \cite{HM4}.
 		
 		\begin{theorem}\label{b1}
 			Let $A = [a_{ij}]$ be an $n \times n$ real matrix, and assume that $\lambda$ is an eigenvalue of $A$ associated 
 			with a left eigenvector $v$ that is orthogonal to $e$. Then 
 			$$ \ \ |\lambda| \leq  \hat{\rho}(A). $$
 		\end{theorem}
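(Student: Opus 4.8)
The plan is to work with the transpose form of the eigenvalue relation. Since $v$ is a left eigenvector of $A$ for $\lambda$, we have $v^T A = \lambda v^T$, equivalently $A^T v = \lambda v$; reading off the $j$-th coordinate gives $\sum_{i=1}^n a_{ij} v_i = \lambda v_j$ for every $j$. The hypothesis $v \perp e$ means $\sum_{i=1}^n v_i = 0$, so for any real number $x_j$ I may subtract $x_j \sum_i v_i = 0$ from the left-hand side without changing it. This is the crucial observation, since it lets me rewrite the eigenvalue equation as
\[
\lambda v_j = \sum_{i=1}^n (a_{ij} - x_j)\, v_i \qquad (j = 1, \dots, n),
\]
where each $x_j$ is a free parameter to be chosen later.

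Next I would pass to the coordinate of largest modulus. Let $j_0$ be an index with $|v_{j_0}| = \max_{1\le j \le n} |v_j| =: \|v\|_\infty$; since $v \neq 0$ we have $|v_{j_0}| > 0$. Applying the triangle inequality to the displayed identity at $j = j_0$ and bounding $|v_i| \le |v_{j_0}|$ gives
\[
|\lambda|\,|v_{j_0}| = \Big| \sum_{i=1}^n (a_{i j_0} - x_{j_0})\, v_i \Big| \le |v_{j_0}| \sum_{i=1}^n |a_{i j_0} - x_{j_0}|,
\]
and dividing by $|v_{j_0}| > 0$ yields $|\lambda| \le \sum_{i=1}^n |a_{i j_0} - x_{j_0}|$, a bound valid for every choice of $x_{j_0}$.

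Finally I would optimize over the free parameter. Minimizing the right-hand side over $x_{j_0} \in \mathbb{R}$ and invoking Theorem \ref{t3} with the $\beta_i$ taken to be the entries of the $j_0$-th column of $A$, the minimum value is exactly $cs_{j_0}(A)$ as in Definition \ref{d1}. Hence $|\lambda| \le cs_{j_0}(A) \le \max_{1\le j\le n} cs_j(A) = \hat{\rho}(A)$, which is the desired conclusion.

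I do not expect a serious obstacle here; the only step requiring genuine care is recognizing that orthogonality to $e$ is precisely what permits the column-wise shift by an arbitrary constant $x_j$, and that Theorem \ref{t3} identifies the optimal such shift with the quantity $cs_j(A)$ defining $\hat{\rho}$. The remainder is the triangle inequality combined with the maximum-modulus coordinate selection familiar from Gershgorin-type arguments.
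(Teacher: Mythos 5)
Your proof is correct and follows essentially the same route the paper takes (via \cite{HM4}): exploiting $v^Te=0$ to shift the $j_0$-th column of $A$ by an arbitrary real constant, applying the triangle inequality at a maximum-modulus coordinate, and invoking Theorem \ref{t3} to identify the optimal shift value with $cs_{j_0}(A)\leq\hat{\rho}(A)$. The argument also correctly remains valid for complex eigenpairs, since only the moduli $|v_i|$ enter the estimates.
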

 		
 		\begin{theorem}\label{b2}
 			Let $A$ be an $n \times n$ real matrix having  eigenvector $e$. Suppose that $\lambda$ is an eigenvalue of $A$ not associated with $e$, 
 			or is associated with $e$ but has geometric multiplicity greater than $1$.  
 			Then $ \ \ |\lambda| \, \leq \, \hat{\rho}(A)$.  In particular, if  $A$ is a stochastic matrix,
 			then $ \ \ |\lambda| \, \leq \, \hat{\rho}(A)$ for every eigenvalue $\lambda $ of $A$ different than $1$.
 		\end{theorem}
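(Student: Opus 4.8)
The plan is to reduce both cases to Theorem \ref{b1}, which already delivers $|\lambda| \leq \hat{\rho}(A)$ whenever $\lambda$ possesses a \emph{left} eigenvector orthogonal to $e$. Hence the whole task is to manufacture, for the eigenvalue $\lambda$ under consideration, a nonzero left eigenvector $v$ of $A$ (that is, $v^T A = \lambda v^T$) satisfying $v^T e = 0$. Write $\lambda_A$ for the eigenvalue with $A e = \lambda_A e$, which exists by hypothesis. The first step I would take is to record the standard orthogonality between left and right eigenvectors attached to distinct eigenvalues: if $v^T A = \lambda v^T$, then evaluating $v^T A e$ in two ways gives $\lambda (v^T e) = v^T A e = \lambda_A (v^T e)$, so $(\lambda - \lambda_A)(v^T e) = 0$, forcing $v^T e = 0$ as soon as $\lambda \neq \lambda_A$.

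\emph{Case 1: $\lambda$ is not associated with $e$.} Then $\lambda \neq \lambda_A$, so I simply pick any left eigenvector $v$ belonging to $\lambda$; the computation above shows $v^T e = 0$, and Theorem \ref{b1} applies verbatim to yield $|\lambda| \leq \hat{\rho}(A)$.

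\emph{Case 2: $\lambda = \lambda_A$ but the geometric multiplicity $g$ of $\lambda_A$ is at least $2$.} Here I would invoke the fact that the left and right eigenspaces of $A$ for $\lambda_A$ have the same dimension $g$, since $\dim \ker(A - \lambda_A I) = \dim \ker(A^T - \lambda_A I)$ (transpose preserves rank, hence nullity). On this $g$-dimensional space of left eigenvectors, the single linear condition $v^T e = 0$ reduces the dimension by at most one, leaving a subspace of dimension at least $g - 1 \geq 1$. Any nonzero $v$ in this subspace is a left eigenvector for $\lambda$ orthogonal to $e$, so Theorem \ref{b1} again gives $|\lambda| \leq \hat{\rho}(A)$. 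The ``in particular'' clause is then immediate: a stochastic matrix satisfies $A e = e$, so $\lambda_A = 1$, and every eigenvalue $\lambda \neq 1$ falls squarely under Case 1.

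I expect the main obstacle to be Case 2, where $\lambda$ coincides with the trivial eigenvalue and so the clean orthogonality of Case 1 is unavailable. The dimension count resolves it, but it leans essentially on the hypothesis $g \geq 2$: with geometric multiplicity exactly one, the unique left-eigenvector direction for $\lambda_A$ could fail to be orthogonal to $e$, and indeed that degenerate sub-case is exactly what the statement excludes. The only point requiring a little care in writing it up is the equality of left and right geometric multiplicities, which I would state as the nullity-of-transpose identity rather than belabor.
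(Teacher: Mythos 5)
Your proof is correct, and it follows the same overall strategy the paper uses: reduce everything to Theorem \ref{b1} by manufacturing a left eigenvector of $\lambda$ orthogonal to $e$. Your Case 1 computation ($(\lambda-\lambda_A)(v^Te)=0$) is exactly the content of the first part of Lemma \ref{ll3} (proved in [HM4, Lemma 2.2]), and the ``in particular'' clause for stochastic matrices is handled the same way. The one place you genuinely diverge is Case 2: the paper (which defers the proof to [HM4] and, for the sharper statement, to [HM6, Corollary 2.14], obtained via Brauer's deflation result) establishes orthogonality of a left eigenvector to $e$ under the weaker hypothesis that $\lambda_A$ is merely \emph{not simple} in the algebraic sense --- covering even the case of geometric multiplicity $1$, as the remark and example following Lemma \ref{ll3} emphasize. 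Your rank--nullity count (left eigenspace has dimension $g\geq 2$, one linear condition $v^Te=0$ leaves dimension $\geq g-1\geq 1$) is more elementary and self-contained, but it buys you only the case $g\geq 2$; since that is precisely the hypothesis of Theorem \ref{b2}, your argument fully proves the statement as given, and you correctly identify where it stops short of the paper's stronger machinery. One small point worth making explicit in a write-up: $\lambda$ in Case 1 may be complex, so $v$ is a complex left eigenvector, and you should note that Theorem \ref{b1} is indeed stated (and proved, cf. the proof of Theorem \ref{tt11}) for such $v$ with $v^Te=0$; this is harmless but should not be passed over silently.
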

 		
 		Even more important is the following enhancement of Theorem \ref{b2}; this allows for better bounds on the eigenvalues, as was shown in \cite{HM4}.
 		
 		\begin{cor}\label{b3}
 			Let $A$ be an $n \times n$ real matrix having  eigenvector $e$. Suppose that $\lambda$ is an eigenvalue of $A$ not associated with $e$, 
 			or is associated with $e$ but has geometric multiplicity greater than $1$.     
 			Then for every $\, m \, \in \, \mathbb{N}$,
 			$$  |\lambda| \ \ \leq \ \ \sqrt[m]{\hat{\rho}(A^m)} \, .$$
 		\end{cor}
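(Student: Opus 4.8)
The plan is to deduce the corollary from Theorem \ref{b1} applied to the power $A^m$ rather than to $A$ itself. The essential observation is that a left eigenvector of $A$ for $\lambda$ that is orthogonal to $e$ is automatically a left eigenvector of $A^m$ for $\lambda^m$, and it stays orthogonal to $e$ since that property belongs to the vector alone. Concretely, if $w^T A = \lambda w^T$, then an immediate induction gives $w^T A^m = \lambda^m w^T$, while $w^T e = 0$ is unchanged. Thus, once such a $w$ is produced, Theorem \ref{b1} applied to the matrix $A^m$ with eigenvalue $\lambda^m$ yields $|\lambda^m| \le \hat{\rho}(A^m)$, that is, $|\lambda|^m \le \hat{\rho}(A^m)$. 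Since each $cs_j$ in Definition \ref{d1} is a sum of larger sorted entries minus a sum of smaller ones, we have $\hat{\rho}(A^m) \ge 0$, so taking $m$-th roots gives the claimed inequality $|\lambda| \le \sqrt[m]{\hat{\rho}(A^m)}$.

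It therefore remains to supply, under the hypotheses of the corollary (which are exactly those of Theorem \ref{b2}), a left eigenvector $w$ of $A$ for $\lambda$ with $w^T e = 0$; this is the step carried out in the proof of Theorem \ref{b2}, which I would reproduce as follows. Write $\lambda_A$ for the trivial eigenvalue, so that $Ae = \lambda_A e$. If $\lambda$ is not associated with $e$, i.e.\ $\lambda \ne \lambda_A$, then for any left eigenvector $v$ of $\lambda$ we have $\lambda\, v^T e = v^T A e = \lambda_A\, v^T e$, forcing $(\lambda - \lambda_A)\, v^T e = 0$ and hence $v^T e = 0$; take $w = v$. If instead $\lambda = \lambda_A$ has geometric multiplicity at least $2$, then, since $A - \lambda I$ and its transpose share the same rank, the left eigenspace of $\lambda$ also has dimension at least $2$; choosing two linearly independent left eigenvectors $u_1, u_2$ and solving the single linear equation $\alpha\,(u_1^T e) + \beta\,(u_2^T e) = 0$ produces a nonzero combination $w = \alpha u_1 + \beta u_2$ with $w^T e = 0$ and $w^T A = \lambda w^T$.

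With $w$ in hand, the argument of the first paragraph completes the proof. The part that requires the most care is the reduction itself: one must resist the temptation to invoke Theorem \ref{b2} directly for $A^m$, because when $\lambda \ne \lambda_A$ it can still happen that $\lambda^m = \lambda_A^m$ (for instance $\lambda = -\lambda_A$ with $m$ even), so that $\lambda^m$ becomes associated with $e$ for $A^m$ and one would then have to re-verify a geometric-multiplicity condition for $A^m$. Routing the argument through Theorem \ref{b1} and the explicit orthogonal eigenvector $w$ avoids this case analysis entirely, since Theorem \ref{b1} asks only for a single left eigenvector orthogonal to $e$, which the same $w$ provides for every power $m$ at once.
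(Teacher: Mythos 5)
Your proof is correct and takes essentially the same approach as the paper's: although the paper defers this proof to \cite[Corollary 2.6]{HM4}, its own generalization (Corollary \ref{cc2}, via Theorem \ref{tt11} and Lemma \ref{ll3}) follows exactly your pattern of producing a left eigenvector $w$ for $\lambda$ with $w^Te=0$, observing that $w$ is a left eigenvector of $A^m$ for $\lambda^m$ still orthogonal to $e$, and applying the single-eigenvector bound (Theorem \ref{b1}) to $A^m$. Your remark on why one should route through Theorem \ref{b1} rather than applying Theorem \ref{b2} to $A^m$ (the case $\lambda^m=\lambda_A^m$) is a sound observation consistent with how the paper structures the argument.
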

		 In the next section we state and prove Corollary \ref{cc2} which provides an effective way to bound and, even better, approximate the largest non-trivial eigenvalue in absolute value of e-matrices by the use of $\{\tau_p\}$. In the same section, we
         discuss how some of the important known properties of $\{\tau_p\}$ can be extended to all e-matrices and
         we prove a convergence theorem similar to Gelfand's formula.          
         In the third section, we derive lower bounds for the smallest non-trivial eigenvalue of nonsingular e-matrices by 
         taking advantage of the closure of $E_n$ under matrix inversion and using  $\{\tau_p\}$.
         In the last section, we use some of the results obtained in the previous sections to bound the spectral radius of Laplacian matrices as well as the algebraic connectivity of their associated graphs. 
         At last, we show that $\tau_{1}(L) \leq \tau_{\infty}(L)$ for every Laplacian matrix $L$ associated with a simple graph. 		
 	\end{section}	
 	 	
%%%%%%%%%%%%%%%%%%%%%%%%%%%%%%%%%%%%%%%%%%
%%%%%%%%%%%%%%%%%%%%%%%%%%%%%%%%%%%%%%%%%%
%%%%%%%%%%%%%%%%%%%%%%%%%%%%%%%%%%%%%%%%%%
%%%%%%%%%%%%%%%%%%%%%%%%%%%%%%%%%%%%%%%%%%
	\begin{section}{Generalization of $\{\tau_p\}$ and their properties to e-matrices}
		%%%%%%%%%%%%%%%%%%%%%%%%%%%%%%%%%%%%%%%%%%
		%%%%%%%%%%%%%%%%%%%%%%%%%%%%%%%%%%%%%%%%%%
		%%%%%%%%%%%%%%%%%%%%%%%%%%%%%%%%%%%%%%%%%%
		%%%%%%%%%%%%%%%%%%%%%%%%%%%%%%%%%%%%%%%%%%
	\begin{subsection}{Explicit forms of $\tau_1$ and $\tau_{\infty}$ over $E_n$}
		Extension of the explicit form of $\tau_1$  to all e-matrices is known and can be found, for example, in \cite[Theorem 6.6]{IS} and \cite[Section 2]{Se1}. Extension of the explicit form of $\tau_{\infty}$ given by Theorem \ref{tt4} is also needed in our analysis as well as for the numerical applications of the provided examples. This can be done easily as shown below.
		\begin{theorem}\label{t1}
			Let  $A = [a_{ij}]$ be an $\n$ e-matrix and let $\l_A$ be its trivial eigenvalue.
			Then
			\begin{equation}\label{F29}
			\tau_1(A) = \frac{1}{2} \underset{i,j}{\max}\sum_{k=1}^n |a_{ik} - a_{jk}|
			= \l_A - \underset{i,j}{\min}\sum_{k=1}^n \min\{a_{ik} , a_{jk}\},			
			\end{equation}
			and
			\begin{equation}\label{F30}
			\tau_{\infty}(A) = \r(A),
			\end{equation}
			where $\r(A)$ is given by Definition \ref{d1}.
		\end{theorem}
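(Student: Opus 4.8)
The plan is to reduce the e-matrix case to the stochastic case already settled by Theorems \ref{tt3} and \ref{tt4}. The crucial structural observation is that $\tau_p$ is unchanged when one adds a multiple of the all-ones matrix $J = ee^T$. Indeed, every $x$ feasible in the defining maximization \eqref{F22} satisfies $x^T e = 0$, hence $Jx = e(e^T x) = 0$, so $(A+cJ)^T x = A^T x + cJx = A^T x$ for every $c \in \mathbb{R}$; taking the maximum over the feasible set gives $\tau_p(A+cJ) = \tau_p(A)$. Alongside this I would record the immediate positive homogeneity $\tau_p(\alpha A) = \alpha\,\tau_p(A)$ for $\alpha > 0$, which reads off directly from \eqref{F22}.

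Next I would normalize. Given an e-matrix $A$ with constant row-sum $\lambda_A$, choose $c$ large and positive enough that every entry of $A + cJ$ is nonnegative and $\lambda_A + nc > 0$; then $S := \frac{1}{\lambda_A + nc}(A + cJ)$ is a genuine stochastic matrix. Combining shift-invariance with homogeneity yields $\tau_1(A) = \tau_1(A+cJ) = (\lambda_A+nc)\,\tau_1(S)$, and likewise $\tau_\infty(A) = (\lambda_A+nc)\,\tau_\infty(S)$, so that Theorems \ref{tt3} and \ref{tt4} now apply to $S$.

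It then remains to unwind the substitution $s_{ik} = (a_{ik}+c)/(\lambda_A+nc)$. For the first equality in \eqref{F29} I use $s_{ik}-s_{jk} = (a_{ik}-a_{jk})/(\lambda_A+nc)$, so the factor $(\lambda_A+nc)$ cancels and $\tau_1(A) = \frac12\max_{i,j}\sum_k|a_{ik}-a_{jk}|$; for the second equality I use $\min\{s_{ik},s_{jk}\} = (\min\{a_{ik},a_{jk}\}+c)/(\lambda_A+nc)$, and the additive $nc$ combines with the constant $1$ in Theorem \ref{tt3} to produce exactly $\lambda_A - \min_{i,j}\sum_k \min\{a_{ik},a_{jk}\}$. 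For \eqref{F30} the same shift-and-scale behavior holds for $\hat\rho$: in Definition \ref{d1} each $cs_j$ is a difference of an \emph{equal} number of sorted column entries, so adding $c$ to every entry cancels out, while scaling all entries by a positive factor scales $cs_j$ by that factor; hence $\hat\rho(S) = \hat\rho(A)/(\lambda_A+nc)$, giving $\tau_\infty(A) = \hat\rho(A)$.

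The argument is essentially mechanical once the two structural facts—$J$-invariance and homogeneity of $\tau_p$, matched by the corresponding invariances of the right-hand sides—are in place, so I do not expect a genuine obstacle. The only points requiring care are confirming that $c$ can indeed be chosen to make $S$ simultaneously nonnegative and row-stochastic with $\lambda_A + nc > 0$, and checking that the $+c$ and $1/(\lambda_A+nc)$ factors cancel correctly against the $\lambda_A$ on the right—in particular the interaction of the additive $nc$ with the constant $1$ in Tan's formula for $\tau_1$.
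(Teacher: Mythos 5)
Your proposal is correct and follows essentially the same route as the paper: shift $A$ by a suitable multiple of $J$ (using that $x^Te=0$ makes $\tau_p$ invariant under such shifts), rescale to a stochastic matrix, and invoke Theorems \ref{tt3} and \ref{tt4}. You merely unify the paper's case split ($A=0$, $A$ nonnegative, $A$ not nonnegative) into a single shift-and-scale argument and spell out the unwinding computations that the paper dismisses as easily seen, both of which are fine.
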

		\begin{proof}		
			The case where $A=0$ is trivial.	
			\begin{enumerate}			
				\item If $A$ is nonnegative and $A\neq0$, then $\l_A>0$ and
				$A' = \frac{1}{\l_A} A$ is stochastic.
				Hence, applications of (\ref{F26a}) and (\ref{F26b}) to $A'$ lead, respectively, to (\ref{F29}) and (\ref{F30}).
				\item If $A$ is not nonnegative, then let $a<0$ be the smallest element of $A$. Choose any positive number $\epsilon$ and construct the nonnegative matrix $B= A+(\epsilon-a)J$ which has strictly positive constant row-sum equal to $\l_A+n(\epsilon-a)$. Using the definition of $\tau_p$ given by (\ref{F22}), we can see easily that $\tau_p(B) = \tau_p(A)$ for $p\in \mathbb{N}\cup \{\infty\}$. Then we apply (\ref{F26a}) and (\ref{F26b}) to the stochastic matrix $\frac{B}{\l_A+n(\epsilon-a)}$ to obtain, respectively, (\ref{F29}) and (\ref{F30}).
			\end{enumerate}
		\end{proof}		
	\end{subsection}
%%%%%%%%%%%%%%%%%%%%%%%%%%%%%%%%%%%%%%%%%%
%%%%%%%%%%%%%%%%%%%%%%%%%%%%%%%%%%%%%%%%%%
%%%%%%%%%%%%%%%%%%%%%%%%%%%%%%%%%%%%%%%%%%
%%%%%%%%%%%%%%%%%%%%%%%%%%%%%%%%%%%%%%%%%%
	\begin{subsection}{The functions $\tau_p$ are semi-norms on $E_n$}
		\begin{definition}\label{d0}
			A matrix semi-norm $f$ on $E_n$ is a mapping from $E_n$ to $\mathbb{R}$ that satisfies the following conditions:\\
			For all matrices $A , B \in E_n$ and for all $\alpha\in \mathbb{R}$,
			\begin{enumerate}     
				\item $f(A) \geq 0$.
				\item If $A=0$ then $f(A) = 0$.
				\item $f(\alpha\,A) = |\alpha| f(A)$.
				\item $f(A+B) \leq f(A)+f(B)$.
				\item $f(AB) \leq f(A) f(B)$.
			\end{enumerate} 
		\end{definition}
		Obviously, the first three axioms of the above definition hold for all functions  $\tau_p$ applied to all real matrices.\\
		The fourth one also holds for all $n \times n$ real matrices because 
                $$||(A+B)^T\,x||_p \leq ||A^T\,x||_p + ||B^T\,x||_p \leq \tau_p(A) + \tau_p(B)$$ for every $x \in \mathbb{R}^n$ with $||x||_p =1$ and $x^T e =0$,
		so that  $\tau_p(A+B) \leq \tau_p(A) + \tau_p(B)$.\\
		To understand why the sub-multiplicative property (the fifth one) holds for all e-matrices, we reproduce the proof given for stochastic matrices 
                in \cite[Theorem 3.6]{IS}, as applied to $E_n$.\\
		There exists a vector $y \in \mathbb{R}^n$ such that $||y||_p = 1, ~ y^Te=0$  and $\tau_p(AB) = ||(AB)^Ty||_p$.
		If $||(AB)^T\,y||_p =0$, then the sub-multiplicative property is obviously satisfied.
		Otherwise, let $x = \displaystyle{\frac{A^Ty}{||A^Ty||_p}}$. 
		Then $||x||_p = 1$ and since $A$ is an e-matrix, we also have $x^Te=0$.
		It follows that 
		$$ \tau_p(AB) = ||B^T (A^Ty)||_p = ||B^Tx||_p\, ||A^Ty||_p \leq \tau_p(B)\,\tau_p(A).$$
		This reasoning assumes that at least $A$ is an e-matrix. But we don't need the same assumption for $B$.
		However, the sub-multiplicative property may not hold  in the general case where $A$ and $B$ are real.
		We can show that for $\tau_1$ and $\tau_{\infty}$ by the following counter-example.
		\begin{example}\label{e4}
			Let 
			$$ A \,= \, \begin{bmatrix}
				~~1    &    ~~2  &   ~~3     \\
				-3     &   -1    &   -2      \\
				~~1    &    ~~1  &   ~~1     \\
			\end{bmatrix}
			\text{~~~and~~~}
			B \,= \, \begin{bmatrix}
			 1     &    2    &   1     \\
			 1     &    1    &   1      \\
			 2     &    1    &   1     \\
			 \end{bmatrix}.
			$$
			Then
			$$AB \,= \, \begin{bmatrix}
			~~9    &    ~~7  &   ~~6     \\
			-8     &   -9    &   -6      \\
			~~4    &    ~~4  &   ~~3     \\
			\end{bmatrix}.
			$$
			In this example, the sub-multiplicative property is not satisfied by $\tau_1$  or by $\tau_{\infty}$ for the product $AB$ 
			since
			 $$ \tau_1(AB) = \frac{45}{2} > \tau_1(A)\,\tau_1(B) = 6.$$
			 $$\tau_{\infty}(AB) = 17 >  \tau_{\infty}(A)\,\tau_{\infty}(B) = 5.$$
			 The above values are obtained by the use of the explicit forms of $\tau_1$ and $\tau_{\infty}$
			 given in Theorem \ref{t1}.		
		\end{example} 
		From the above discussion we can see that the functions $\tau_p$ are vector norms on $M_n(\mathbb{R})$
			regarded as a vector space of dimension $n^2$ and they are  matrix semi-norms on $E_n$, the set of $\n$ e-matrices.
	
	\end{subsection}
%%%%%%%%%%%%%%%%%%%%%%%%%%%%%%%%%%%%%%%%%%
%%%%%%%%%%%%%%%%%%%%%%%%%%%%%%%%%%%%%%%%%%
%%%%%%%%%%%%%%%%%%%%%%%%%%%%%%%%%%%%%%%%%%
%%%%%%%%%%%%%%%%%%%%%%%%%%%%%%%%%%%%%%%%%%
	\begin{subsection}{Functions $\tau_p$ as bounds for the eigenvalues of e-matrices.}
		In this subsection, we state more general versions of some of the theorems cited in the introduction.
		We start with an update of Theorem \ref{b1}.		
		\begin{theorem}\label{tt11}
			Let $A$ be an $\n$ real matrix having eigenvalue $\l$ associated with a left eigenvector $v$ that is orthogonal to $e$.
			Then
			\begin{equation}\label{F3}
			|\l| \leq \tau_p(A),
			\end{equation}
			for every $p \in \mathbb{N} \cup {\infty}$.
		\end{theorem}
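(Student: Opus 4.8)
The plan is to exploit the variational definition (\ref{F22}): since $\tau_p(A)=\max\{\|A^Tx\|_p : x\in\mathbb R^n,\ x^Te=0,\ \|x\|_p=1\}$, it suffices to exhibit a single \emph{real} vector $x\neq 0$ with $x^Te=0$ and $\|A^Tx\|_p\ge |\lambda|\,\|x\|_p$; after normalizing, such an $x$ is an admissible competitor in the maximization and forces $\tau_p(A)\ge|\lambda|$. I will use throughout that a left eigenvector satisfies $v^TA=\lambda v^T$, i.e. $A^Tv=\lambda v$, and that orthogonality to $e$ means $e^Tv=0$. If $\lambda\in\mathbb R$ this is immediate: one may take $v$ real, so $x=v$ is real with $x^Te=0$ and $\|A^Tv\|_p=|\lambda|\,\|v\|_p$, whence (\ref{F3}).

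The genuine obstacle is a non-real eigenvalue, for then $v$ must be complex. Writing $v=v_1+iv_2$ with $v_1,v_2\in\mathbb R^n$ and $\lambda=a+ib$, and separating the real and imaginary parts of $A^Tv=\lambda v$, I get $A^Tv_1=av_1-bv_2$ and $A^Tv_2=bv_1+av_2$, while $e^Tv=0$ forces $e^Tv_1=e^Tv_2=0$; moreover $v_1,v_2$ are linearly independent, since otherwise $\lambda$ would be real. Hence $W=\operatorname{span}_{\mathbb R}\{v_1,v_2\}$ is a two-dimensional real subspace of $e^{\perp}$ invariant under $A^T$, on which $A^T$ acts in the basis $(v_1,v_2)$ by the matrix $\begin{pmatrix} a & b\\ -b & a\end{pmatrix}=r\,R_{-\theta}$, where $r=|\lambda|$, $a=r\cos\theta$, $b=r\sin\theta$, and $R_{-\theta}$ denotes planar rotation by $-\theta$. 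Introducing the induced norm $g(u)=\|u_1v_1+u_2v_2\|_p$ on $\mathbb R^2$ (a genuine norm by independence), any real $x=u_1v_1+u_2v_2\in W$ obeys $\|x\|_p=g(u)$ and $\|A^Tx\|_p=r\,g(R_{-\theta}u)$, so the whole problem reduces to producing $u_0\neq 0$ with $g(R_{-\theta}u_0)\ge g(u_0)$.

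The complex case is where the real difficulty lies, and the reduction above concentrates it in this planar claim; I expect that to be the crux. It is settled by a short compactness argument: choose $u^{*}$ to maximize the continuous norm $g$ over the Euclidean unit circle $S^1$ and set $u_0=R_{\theta}u^{*}\in S^1$. Then $R_{-\theta}u_0=u^{*}$, so $g(R_{-\theta}u_0)=\max_{u\in S^1}g(u)\ge g(u_0)$. The associated $x_0\in W\setminus\{0\}$ therefore satisfies $x_0^Te=0$ and $\|A^Tx_0\|_p\ge r\,g(u_0)=|\lambda|\,\|x_0\|_p$, and normalizing $x_0$ yields (\ref{F3}). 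A minor point to record is that both the Hermitian and the transpose readings of ``orthogonal to $e$'' and of ``left eigenvector'' produce the same pair $v_1,v_2\perp e$, so the argument is insensitive to that convention.
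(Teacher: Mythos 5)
Your proof is correct, and it is organized along a genuinely different route from the paper's, which follows Rothblum and Tan: the paper introduces the auxiliary function $N_p(v)=\max_{\theta\in\mathbb{R}}\|\cos\theta\,x-\sin\theta\,y\|_p$ on $\mathbb{C}^n$ (where $v=x+iy$), asserts it is a vector norm satisfying $N_p(\lambda v)=|\lambda|N_p(v)$, scales $v$ so that $N_p(v)=1$, and runs the chain $|\lambda|=N_p(A^Tv)\leq \tau_p(A)\,N_p(v)=\tau_p(A)$, using that every vector $\cos\theta\,x-\sin\theta\,y$ is orthogonal to $e$. You instead dispose of the real case directly, pass to the two-dimensional real invariant subspace $W=\mathrm{span}\{v_1,v_2\}\subseteq e^{\perp}$, identify $A^T$ restricted to $W$ as the scaled rotation $r\,R_{-\theta}$ in the basis $(v_1,v_2)$, and exhibit a single admissible competitor for the maximum in (\ref{F22}) by maximizing the induced planar norm $g$ over the Euclidean circle and rotating the maximizer back. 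The two arguments share the same engine---your $\max_{u\in S^1}g(u)$ is exactly the paper's $N_p(v)$, and the homogeneity identity $N_p(\lambda v)=|\lambda|N_p(v)$ encodes precisely your rotation trick $g(R_{-\theta}u_0)=g(u^{*})\geq g(u_0)$---but your packaging is more elementary and more transparent: you never need to verify that $N_p$ is a norm on $\mathbb{C}^n$ or justify its invariance under multiplication by unimodular complex scalars (a point the paper leaves as ``evidently''), the compactness step is explicit rather than hidden in an equality of maxima, and the geometric mechanism (that $A^T$ acts as a scaled rotation on a plane inside $e^{\perp}$, and the maximum of a norm on a circle cannot decrease under rotation) is laid bare, including the small but necessary observation that $v_1,v_2$ are linearly independent when $\lambda\notin\mathbb{R}$. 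What the paper's formulation buys in exchange is a uniform computation with no case split and a reusable auxiliary norm that fits the broader Rothblum--Tan framework; both proofs use only that $\|\cdot\|_p$ is a norm on $\mathbb{R}^n$, so each in fact proves the statement for $\tau$ defined by an arbitrary vector norm, not just the $l_p$ family.
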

		\begin{proof}
			The proof of Theorem \ref{tt11} is similar to the one given for \cite[Theorem 3.1]{RT} by Rothblum and Tan, except that they assumed that $A$   
                        is non-negative and $\l$ is its subdominant eigenvalue. Their proof remains valid if $A$ is any real matrix and $\l$ is an eigenvalue of $A$ 
                        associated with a left eigenvector $v$ that is orthogonal to $e$.\\
			First, notice that the definition of $\tau_p$ given by (\ref{F22}), can be written
			\begin{equation}\label{F31}
			\tau_p(A) = \underset{\underset{\underset{u \neq 0}{u^T e =0}}{u\in \mathbb{R}^n}}{\max} \frac{\|A^T u\|_p}{\|u\|_p}.
			\end{equation}
			There exist $x, y \in \mathbb{R}^n$  such that $v = x+iy$.
			Since $e^T v =0$, the real vector  $\cos \theta\, x - \sin \theta \,y$ is orthogonal to $e$ for every $\theta \in \mathbb{R}$. 
                         Therefore by (\ref{F31}),
			 \begin{equation}\label{F32}
			 \|\cos \theta\, x - \sin \theta \,y\|_p \,\tau_p(A) \geq \|A^T (\cos \theta\, x - \sin \theta \,y)\|_p.
			 \end{equation}
			Consider the function
			\begin{equation}\label{F33}
			N_p(v) = \underset{\theta \in \mathbb{R}}{\max}~ \|  \cos\theta\,x -\,\sin\theta\,y\|_p,
			\end{equation}
			 which is, evidently, a vector norm on $\mathbb{C}^n$.
			We can scale the eigenvector $v$ such that $N_p(v) = 1$.
			Hence, 
			\begin{align*}
			|\l|  &= |\l| N_p(v)\\
				  &= N_p(v^* A)\\
				  &= N_p(A^T v) \\
				  &= \underset{\theta \in \mathbb{R}}{\max}~ \|  A^T (\cos\theta\,x -\,\sin\theta\,y)\|_p\\				  
				  &\leq \underset{\theta \in \mathbb{R}}{\max}~ \|\cos\theta\,x -\,\sin\theta\,y\|_p \tau_p(A), ~~\text{by (\ref{F32})}\\
				  &= \tau_p(A)\,  \underset{\theta \in \mathbb{R}}{\max}~ \|\cos\theta\,x -\,\sin\theta\,y\|_p\\
				  &=\tau_p(A)\,N_p(v), ~~\text{by (\ref{F33})}\\
				  &= \tau_p(A).
			\end{align*}		
			\end{proof}		
		Given that $A$ is an e-matrix, we discuss a sufficient condition for $\tau_p(A)$ to be an upper-bound for the absolute value of the trivial eigenvalue $\l_A$ of $A$.
		One may wonder why we do this since $\l_A$ is trivially equal to the constant row-sum of $A$ and therefore need not to be bounded or approximated. This is to have a better understanding of the action of the functions $\tau_p$ on e-matrices  and also to have flexibility and ease in the statement and application of some of our following results. 
		\begin{lemma}\label{ll3}
				Let $A$ be an $\n$ e-matrix with $n\geq 2$.
				\begin{enumerate}
					\item For every left eigenvector $w$  of $A$ associated with a non-trivial eigenvalue $\l$, we have
					$$w^Te=0.
					$$
					\item If $\l_A$, the trivial eigenvalue of $A$ is not simple, then there is a left eigenvector $v$ associated with $\l_A$ such that $v^T e =0$.
				\end{enumerate}
		\end{lemma}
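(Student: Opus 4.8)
The plan is to prove the two parts separately: the first by the standard orthogonality between a left and a right eigenvector belonging to distinct eigenvalues, and the second by a short case analysis on the geometric multiplicity of $\l_A$.

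For part (1), I would start from the two defining relations $A e = \l_A e$ (which holds precisely because $A$ is an e-matrix, so $e$ is a right eigenvector for the trivial eigenvalue) and $w^T A = \l w^T$. The idea is to evaluate the scalar $w^T A e$ in two ways: contracting with the first relation gives $\l_A\, w^T e$, while contracting with the second gives $\l\, w^T e$. Equating them yields $(\l - \l_A)\, w^T e = 0$, and since $w$ is associated with a \emph{non-trivial} eigenvalue we have $\l \neq \l_A$, which forces $w^T e = 0$.

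For part (2), I would split according to the geometric multiplicity $d$ of $\l_A$, recalling that $\l_A$ has the same geometric multiplicity as an eigenvalue of $A$ and of $A^T$ (the ranks of $A - \l_A I$ and $A^T - \l_A I$ agree), so that the left-eigenvector space $\ker(A^T - \l_A I)$ also has dimension $d$. If $d \geq 2$, then the single linear constraint $v^T e = 0$ cuts out a subspace of dimension at least $d - 1 \geq 1$ inside this eigenspace, which therefore contains a nonzero left eigenvector $v$ with $v^T e = 0$, and we are done. The remaining case is $d = 1$ together with algebraic multiplicity at least $2$; here a dimension count alone is insufficient, and this is the step I expect to be the main obstacle.

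To handle $d = 1$, I would exploit the Jordan structure at $\l_A$. Since the geometric multiplicity is $1$ and $e$ already lies in the right eigenspace, that eigenspace is exactly $\mathrm{span}\{e\}$; and because the algebraic multiplicity exceeds $1$, there is a generalized eigenvector $g$ with $(A - \l_A I) g$ a nonzero ordinary eigenvector, hence $(A - \l_A I) g = \alpha e$ for some scalar $\alpha \neq 0$. Letting $v$ denote the (essentially unique) left eigenvector, so that $v^T (A - \l_A I) = 0$, I would compute $\alpha\, v^T e = v^T (A - \l_A I) g = 0$ and conclude $v^T e = 0$. This extracts the required orthogonality from the nilpotent part of the Jordan block and completes the lemma.
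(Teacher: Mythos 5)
Your proof is correct and complete, but note that the paper itself does not actually argue this lemma: its ``proof'' consists of two citations, sending the reader to the proof of Lemma 2.2 in \cite{HM4} for part (1) and to the proof of Corollary 2.14 in \cite{HM6} for part (2). Your part (1) is the standard biorthogonality computation --- evaluating $w^T A e$ once as $\l\, w^T e$ and once as $\l_A\, w^T e$, so that $(\l-\l_A)w^Te=0$ --- which is almost certainly the same argument as in the cited source, and it works verbatim for complex $w$ and $\l$. Your part (2) is where the self-contained route pays off: the dimension count disposes of geometric multiplicity $d\geq 2$ (using that $A$ and $A^T$ have equal geometric multiplicities at $\l_A$), and the Jordan-chain computation $\alpha\, v^Te = v^T(A-\l_A I)g = 0$ with $(A-\l_A I)g=\alpha e$, $\alpha\neq 0$, handles exactly the delicate case $d=1$ with algebraic multiplicity at least $2$ --- which is precisely the case the authors single out as noteworthy in the discussion and the $2\times 2$ example immediately after the lemma. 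Two minor points you may wish to make explicit: first, the existence of $g$ with $(A-\l_A I)g$ a nonzero eigenvector follows because $\ker(A-\l_A I)\subsetneq\ker(A-\l_A I)^2$ whenever the algebraic multiplicity strictly exceeds the geometric one; second, since $\l_A$ is real (it is the common row sum), the left eigenvector $v$ in both cases of part (2) can be taken real, matching the real orthogonality relation $v^Te=0$ as it is used later in the paper. In short, your approach buys a self-contained and elementary proof where the paper only offers pointers to two other papers, at the cost of a page of routine linear algebra.
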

	\begin{proof}
		For the first assertion of the lemma, see the proof of \cite[Lemma 2.2]{HM4}. For the second assertion, see the proof of \cite[Corollary 2.14]{HM6}.
	\end{proof}	
		Note that even in the case where $\lambda_A$ is not simple
		with \gm\, equal to $1$,
		the left eigenvector $v$ of $A$  associated with $\lambda_A$,
		which is unique up to scalar multiplication, 
		is orthogonal to $e$ by Lemma \ref{ll3}. This fact %which may look unxpected 
		can be illustrated by the following example.
		\begin{example}
			Let $$ A \,= \, \begin{bmatrix}
			~~1   &   1    \\
			-1   &   3    \\
			\end{bmatrix} \\.$$
			A Jordan decomposition of $A$ is
			$$ A \,= \, \begin{bmatrix}
			-1   &   2    \\
			-1   &   1    \\
			\end{bmatrix}  \, 
			\begin{bmatrix}
			2   &   1    \\
			0   &   2    \\
			\end{bmatrix}  \,
			\begin{bmatrix}
			1   &   -2    \\
			1   &   -1    \\
			\end{bmatrix}.
			$$
			From the above, we can see that $A$ has trivial eigenvalue
			$\lambda_A = 2$ with \am  equal to $2$ and \gm~ equal to $1$.
			We can see also that 
			$(-1 \,,\, 1) A = 2 (-1 \,,\, 1)$ and 
			$A (1 \,,\, 1)^T = 2 (1 \,,\, 1)^T$. Obviously,
			the vectors $(-1 \,,\, 1)^T$  and $e= (1 \,,\, 1)^T$ are orthogonal to each other.
			This is consistent with Lemma \ref{ll3}.                               
		\end{example}
		\begin{theorem}\label{tt10}
			Let $A$ be an $\n$ e-matrix.
			Suppose that the trivial eigenvalue $\l_A$ of $A$ is not simple. Then
			\begin{equation}\label{F1}
			|\l_A| \leq \tau_p(A)
			\end{equation}
			and more generally, for every $k \in \mathbb{N},$
			\begin{equation}\label{F2}
			|\l_A| \leq \sqrt[k]{\tau_p(A^k)}.
			\end{equation}
		\end{theorem}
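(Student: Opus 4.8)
The plan is to obtain both inequalities as essentially immediate consequences of Lemma \ref{ll3} together with Theorem \ref{tt11}, the only real work being the observation that a left eigenvector survives under taking powers. First I would dispose of the inequality (\ref{F1}). Since $\lambda_A$ is assumed not simple we automatically have $n\geq 2$, so Lemma \ref{ll3} applies; by its second assertion there is a left eigenvector $v$ of $A$ associated with $\lambda_A$ satisfying $v^T e = 0$. Because $A$ is a real matrix and $\lambda_A$ is one of its eigenvalues possessing a left eigenvector orthogonal to $e$, Theorem \ref{tt11} immediately yields $|\lambda_A| \leq \tau_p(A)$ for every $p \in \mathbb{N} \cup \{\infty\}$.

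For the general bound (\ref{F2}) the idea is to apply the same argument to the matrix $A^k$ rather than re-deriving everything. I would keep the very same $v$ produced above: from $v^T A = \lambda_A\, v^T$ one gets by iteration $v^T A^k = \lambda_A^k\, v^T$, so $v$ is a left eigenvector of $A^k$ associated with the eigenvalue $\lambda_A^k$, and it still satisfies $v^T e = 0$. Since $A^k$ is again a real $n\times n$ matrix, Theorem \ref{tt11} applied to $A^k$ gives $|\lambda_A^k| \leq \tau_p(A^k)$, that is $|\lambda_A|^k \leq \tau_p(A^k)$. Taking $k$-th roots produces $|\lambda_A| \leq \sqrt[k]{\tau_p(A^k)}$, and the case $k=1$ recovers (\ref{F1}).

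The point I would emphasize, rather than a genuine obstacle, is that the non-simplicity hypothesis is exactly what is needed: it is what guarantees, via Lemma \ref{ll3}, the existence of a left eigenvector for the trivial eigenvalue that is orthogonal to $e$. For a simple trivial eigenvalue this can fail (a row-stochastic matrix has left Perron eigenvector with nonnegative, not zero-summing, entries), so Theorem \ref{tt11} would not be applicable to $\lambda_A$. A second thing worth checking explicitly is that one does not need to invoke Lemma \ref{ll3} separately for $A^k$, nor verify that $\lambda_A^k$ is a non-simple trivial eigenvalue of $A^k$; the single vector $v$ obtained from $A$ serves simultaneously for all powers, which is what makes the passage to (\ref{F2}) clean. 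I would also note in passing that $\lambda_A^k$ is indeed the trivial eigenvalue of the e-matrix $A^k$, since $A^k e = \lambda_A^k e$, so the statement is internally consistent, although this identification is not logically required for the proof.
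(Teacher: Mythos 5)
Your proof is correct and follows essentially the same route as the paper: the paper also obtains (\ref{F1}) by combining Theorem \ref{tt11} with the second assertion of Lemma \ref{ll3}, and gets (\ref{F2}) by the same power-iteration argument (the reasoning of Corollary \ref{b3}, i.e.\ applying the bound to $A^k$ with the unchanged left eigenvector $v$), which you have simply written out in full.
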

		\begin{proof}
			The first inequality
			follows from Theorem \ref{tt11} and Lemma \ref{ll3}.
			For the second inequality,
			we use a reasoning similar to the one in the proof of Corollary \ref{b3}. (\cite[Corollary 2.6]{HM4}).
		\end{proof}
		
		Using Theorem \ref{tt11}, Lemma \ref{ll3} and Theorem \ref{tt10}, we obtain the following updated versions of Theorem \ref{tt2} and Corollary \ref{b3}.
		\begin{theorem} \label{tt9}
			Let $A$ be an e-matrix and suppose that $\l$ is an non-trivial eigenvalue of $A$. Then
			\begin{equation}\label{F4}
			|\l| \leq \tau_p(A).
			\end{equation}
			If the trivial eigenvalue $\l_A$ is not simple, then we also have
			\begin{equation}\label{F5}
			|\l_A| \leq \tau_p(A).
			\end{equation}			
		\end{theorem}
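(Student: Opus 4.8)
The plan is to derive both inequalities as immediate consequences of the machinery already in place, namely Theorem \ref{tt11} together with Lemma \ref{ll3}. No new estimate is required: the entire argument consists of producing, in each case, a left eigenvector that is orthogonal to $e$ and then invoking Theorem \ref{tt11}. Thus this statement is essentially a repackaging, collecting Theorems \ref{tt2} and \ref{b3} into the e-matrix setting.

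For the first inequality (\ref{F4}), I would begin with a non-trivial eigenvalue $\l$ of the e-matrix $A$ and choose any (possibly complex) left eigenvector $w$ associated with it, so that $w^{*}A = \l w^{*}$. The decisive point is that part (1) of Lemma \ref{ll3} forces $w^{T}e = 0$; that is, $w$ is automatically orthogonal to $e$. This is exactly the hypothesis required by Theorem \ref{tt11}, so applying that theorem to $w$ yields $|\l| \le \tau_p(A)$ for every $p \in \mathbb{N}\cup\{\infty\}$, which is (\ref{F4}). (Here $n \ge 2$, since for $n=1$ the only eigenvalue is the trivial one and there is nothing to prove.)

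For the second inequality (\ref{F5}), the assumption that $\l_A$ is not simple is precisely what allows part (2) of Lemma \ref{ll3} to supply a left eigenvector $v$ associated with $\l_A$ satisfying $v^{T}e = 0$. Applying Theorem \ref{tt11} to this $v$ then gives $|\l_A| \le \tau_p(A)$. Equivalently, this is nothing but inequality (\ref{F1}) of Theorem \ref{tt10}, and so it may simply be cited from there.

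I do not anticipate any computational obstacle; the only genuine subtlety is conceptual, namely understanding why the non-simplicity hypothesis cannot be dropped in the second part. When $\l_A$ is simple, its left eigenvector (unique up to scaling) need not be orthogonal to $e$ — for a stochastic matrix, for instance, the left Perron vector is strictly positive — so Theorem \ref{tt11} no longer applies and there is in general no reason for $|\l_A|$ to be dominated by $\tau_p(A)$. The orthogonality $v^{T}e = 0$ needed to invoke Theorem \ref{tt11} is furnished for free by Lemma \ref{ll3} for non-trivial eigenvalues, whereas for the trivial eigenvalue it is exactly the non-simplicity assumption that guarantees it; everything else reduces to a direct citation.
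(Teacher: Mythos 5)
Your proposal is correct and follows exactly the paper's route: the paper derives Theorem \ref{tt9} precisely by combining Theorem \ref{tt11} with part (1) of Lemma \ref{ll3} for inequality (\ref{F4}), and by invoking Theorem \ref{tt10} (itself Theorem \ref{tt11} plus part (2) of Lemma \ref{ll3}) for inequality (\ref{F5}). Your added remark on why non-simplicity of $\l_A$ is indispensable — e.g.\ the strictly positive left Perron vector of a stochastic matrix is not orthogonal to $e$ — is a sound clarification consistent with the paper's discussion.
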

		\begin{cor}\label{cc2}
			Let $A$ be an $\n$ e-matrix. Suppose that $\l$ is a non-trivial eigenvalue of $A$.
			Then for every $k \in \mathbb{N},$
			\begin{equation}\label{F6}
			|\l| \leq \sqrt[k]{\tau_p(A^k)}.
			\end{equation}
			If the trivial eigenvalue $\l_A$ is not simple, then we also have
			\begin{equation}\label{F7}
			|\l_A| \leq \sqrt[k]{\tau_p(A^k)}.
			\end{equation}
		\end{cor}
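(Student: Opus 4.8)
The plan is to reduce both inequalities to a single application of the base bound (Theorem \ref{tt11}) to the power matrix $A^k$, exploiting the fact that the quantity on the right, $\sqrt[k]{\tau_p(A^k)}$, is exactly what one obtains by bounding $|\lambda^k|$ rather than $|\lambda|$. The governing observation is that a left eigenvector of $A$ orthogonal to $e$ is automatically a left eigenvector of $A^k$ (for the $k$-th power of the eigenvalue) and stays orthogonal to $e$. Since Theorem \ref{tt11} requires only orthogonality to $e$ and not any non-triviality condition, this lets me feed $A^k$ into it directly.

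For the first inequality, let $\lambda$ be a non-trivial eigenvalue of $A$. By Lemma \ref{ll3}(1), there is a left eigenvector $w$ with $w^{T} A = \lambda\, w^{T}$ and $w^{T} e = 0$. Iterating, $w^{T} A^{k} = \lambda^{k} w^{T}$, so $w$ is a left eigenvector of the real matrix $A^{k}$ associated with $\lambda^{k}$, and it is still orthogonal to $e$. Applying Theorem \ref{tt11} to $A^{k}$ then yields $|\lambda|^{k} = |\lambda^{k}| \le \tau_p(A^{k})$, and extracting the $k$-th root gives $|\lambda| \le \sqrt[k]{\tau_p(A^{k})}$. This is the analogue of the argument used for Corollary \ref{b3}.

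For the second inequality, assume $\lambda_A$ is not simple. Then Lemma \ref{ll3}(2) supplies a left eigenvector $v$ associated with $\lambda_A$ satisfying $v^{T} e = 0$. The same iteration gives $v^{T} A^{k} = \lambda_A^{k} v^{T}$, so $v$ is a left eigenvector of $A^{k}$ for $\lambda_A^{k}$, orthogonal to $e$; Theorem \ref{tt11} applied to $A^{k}$ produces $|\lambda_A|^{k} \le \tau_p(A^{k})$, and again taking $k$-th roots completes the proof. Note that $A^{k}$ is itself an e-matrix with trivial eigenvalue $\lambda_A^{k}$ (since $A^{k} e = \lambda_A^{k} e$), which keeps everything inside the setting of $E_n$, although the argument only uses that $A^{k}$ is a real matrix.

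I expect the only genuine subtlety to be a temptation to route the proof through the more polished Theorem \ref{tt9} by asserting that $\lambda^{k}$ is a non-trivial eigenvalue of $A^{k}$. This is where one can go wrong: $\lambda \neq \lambda_A$ does \emph{not} force $\lambda^{k} \neq \lambda_A^{k}$ (for instance $\lambda = -\lambda_A$ with $k$ even, or $\lambda$ a root-of-unity multiple of $\lambda_A$), so $\lambda^{k}$ may coincide with the trivial eigenvalue of $A^{k}$, and one would then be stuck verifying whether the trivial eigenvalue of $A^{k}$ is simple. Working directly with the left eigenvector and Theorem \ref{tt11} avoids this case analysis entirely, since that theorem is blind to whether the eigenvalue is trivial or not.
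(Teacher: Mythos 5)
Your proof is correct and follows essentially the same route as the paper: the paper obtains Corollary \ref{cc2} by combining Theorem \ref{tt11} with Lemma \ref{ll3} and the power-of-eigenvector reasoning of Corollary \ref{b3} (and Theorem \ref{tt10} for the non-simple trivial eigenvalue), which is exactly the argument you spell out by applying Theorem \ref{tt11} to $A^k$ with the same left eigenvector orthogonal to $e$. Your closing remark about why one cannot simply route through Theorem \ref{tt9} --- since $\lambda^{k}$ may coincide with $\lambda_A^{k}$ --- correctly identifies the subtlety that makes the eigenvector-level argument the right one.
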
		
	\end{subsection}
%%%%%%%%%%%%%%%%%%%%%%%%%%%%%%%%%%%%%%%%%%
%%%%%%%%%%%%%%%%%%%%%%%%%%%%%%%%%%%%%%%%%%
%%%%%%%%%%%%%%%%%%%%%%%%%%%%%%%%%%%%%%%%%%
%%%%%%%%%%%%%%%%%%%%%%%%%%%%%%%%%%%%%%%%%%

	\begin{subsection}{Convergence formula satisfied by the functions $\tau_p $}
			We show a convergence formula satisfied by all functions $\tau_p $.
		    It is similar to Gelfand's formula \cite{G}, but differs from it in the sense
			that the limit is not always equal to the spectral radius of the matrix.
			Let's first recall the statement of Gelfand's formula.
		\begin{theorem}\label{tt8}(Gelfand's formula).
			Let $A$ be a square complex matrix and let $\rho(A)$ be 
			the spectral radius of $A$. Let $||\,.\,||$ be a matrix norm on the set of complex matrices.
			Then 
			\begin{equation}\label{f6}
			\rho(A) = \lim_{k \to \infty}~||A^k||^{1/k}.
			\end{equation}
		\end{theorem}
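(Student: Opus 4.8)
The plan is to prove that the limit in \eqref{f6} exists and equals $\rho(A)$ by squeezing the sequence $\|A^k\|^{1/k}$ between a lower and an upper estimate, both of which tend to $\rho(A)$. Throughout I would lean on two elementary facts valid for every submultiplicative matrix norm $\|\cdot\|$: that $\rho(M) \le \|M\|$ for every square matrix $M$ (seen by applying the norm to the matrix all of whose columns equal a fixed eigenvector), and that $\rho(A^k) = \rho(A)^k$.

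For the lower bound I would apply both facts to $M = A^k$. Since $\rho(A^k) = \rho(A)^k \le \|A^k\|$, taking $k$-th roots gives $\rho(A) \le \|A^k\|^{1/k}$ for every $k$, and hence $\rho(A) \le \liminf_{k\to\infty}\|A^k\|^{1/k}$. This half holds for the given norm directly and costs essentially nothing.

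The upper bound is the substantive part. I would fix $\epsilon > 0$ and set $B = A/(\rho(A)+\epsilon)$, so that $\rho(B) = \rho(A)/(\rho(A)+\epsilon) < 1$. The crux is to show $B^k \to 0$ as $k \to \infty$, which I would extract from the Schur triangularization $B = U(D+N)U^*$ with $D$ diagonal (its entries the eigenvalues of $B$, all of modulus $< 1$) and $N$ strictly upper triangular: conjugating by a diagonal scaling $D_t = \mathrm{diag}(t, t^2, \dots, t^n)$ shrinks the off-diagonal mass, producing for large $t$ an induced matrix norm $\|\cdot\|_*$ in which $\|B\|_* < 1$. Submultiplicativity then forces $\|B^k\|_* \le \|B\|_*^{\,k} \to 0$, and therefore $\|B^k\| \to 0$ in every norm by the equivalence of all norms on the finite-dimensional space $M_n(\mathbb{C})$. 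Consequently $\|B^k\| \le 1$ for all $k$ past some $K$, which unwinds to $\|A^k\| \le (\rho(A)+\epsilon)^k$, i.e. $\|A^k\|^{1/k} \le \rho(A)+\epsilon$ for $k \ge K$. Hence $\limsup_{k\to\infty}\|A^k\|^{1/k} \le \rho(A)+\epsilon$, and letting $\epsilon \downarrow 0$ yields $\limsup_{k\to\infty}\|A^k\|^{1/k} \le \rho(A)$.

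Combining the two estimates gives $\rho(A) \le \liminf \le \limsup \le \rho(A)$, so the limit exists and equals $\rho(A)$. I expect the main obstacle to be exactly the upper bound, and within it the construction of a norm whose value on $B$ drops below $1$; once one knows that $\rho(B) < 1$ implies $B^k \to 0$ (equivalently, that the spectral radius is the infimum of all induced matrix norms), everything else is routine. The passage between different norms rests entirely on their equivalence in finite dimensions, which is what keeps the whole argument elementary and purely finite-dimensional.
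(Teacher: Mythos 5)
Your proposal is correct, but there is nothing in the paper to measure it against: the paper does not prove Theorem \ref{tt8} at all. Gelfand's formula is simply recalled as a classical result, with the citation \cite{G}, and is then used as a black box in the proof of the convergence theorem for $\tau_p$ (Theorem \ref{t5}). Your squeeze argument is the standard textbook proof and is sound in all its steps: the lower bound $\rho(A)\le \|A^k\|^{1/k}$ from $\rho(A)^k=\rho(A^k)\le\|A^k\|$; the upper bound via $B=A/(\rho(A)+\epsilon)$, the Schur form $B=U(D+N)U^*$, and conjugation by $D_t=\mathrm{diag}(t,t^2,\dots,t^n)$ to manufacture an induced norm with $\|B\|_*<1$, hence $\|B^k\|_*\to 0$; and the transfer back to the given norm by equivalence of norms on the finite-dimensional space of $n\times n$ complex matrices. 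If you write it out in full, two small points deserve a line each: the lower-bound step uses submultiplicativity of the given norm, which is the usual meaning of ``matrix norm'' and clearly the intended one here (the paper applies the formula with induced $p$-norms, cf.\ Lemma \ref{ll1} and the proof of Theorem \ref{t5}); and in your eigenvector-matrix trick for $\rho(M)\le\|M\|$ you should note that the matrix $V$ whose columns all equal a fixed eigenvector is nonzero, so $\|V\|>0$ can be cancelled in $|\lambda|\,\|V\|=\|MV\|\le\|M\|\,\|V\|$. Neither point is a gap; your outline, once these remarks are inserted, is a complete and self-contained proof of exactly the statement the paper imports from \cite{G}.
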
  
			To prove our convergence formula we need, besides Gelfand's formula, the following three lemmas.
			\begin{lemma}\label{ll1}
				Let $A$ be an e-matrix and
				let $\| A \|_p$ be the matrix norm of $A$ induced by the corresponding $l_p$ vector norm.
				Then
				$$ \tau_p(A) \leq \| A^T \|_p.
				$$ 
			\end{lemma}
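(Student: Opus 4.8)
The plan is to derive the bound directly from the two definitions, observing that $\tau_p(A)$ and $\|A^T\|_p$ are maxima of one and the same Rayleigh-type quotient taken over nested feasible sets. First I would use the quotient form of the definition recorded in (\ref{F31}), namely $\tau_p(A) = \max \{\, \|A^T u\|_p / \|u\|_p : u \in \mathbb{R}^n,\ u^T e = 0,\ u \neq 0 \,\}$, and recall that the induced matrix norm admits the analogous description $\|A^T\|_p = \max \{\, \|A^T u\|_p / \|u\|_p : u \in \mathbb{R}^n,\ u \neq 0 \,\}$, where now the constraint $u^T e = 0$ has been dropped.

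Next I would simply note that the feasible set for $\tau_p(A)$, consisting of the nonzero vectors orthogonal to $e$, is contained in the feasible set for $\|A^T\|_p$, consisting of all nonzero vectors. Since the objective function $u \mapsto \|A^T u\|_p / \|u\|_p$ is literally the same in the two problems, maximizing over the smaller set cannot produce a value larger than maximizing over the larger one; this yields $\tau_p(A) \leq \|A^T\|_p$, which is exactly the claim.

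I do not expect any genuine obstacle here: the inequality is precisely the e-matrix analogue of the first statement of Theorem \ref{tt1}, and in fact neither nonnegativity nor the constant-row-sum property of $A$ is used, so the argument holds verbatim for an arbitrary real matrix. The only point requiring a moment of care is to ensure that both maxima are taken over the same ambient real vector space $\mathbb{R}^n$, so that the inclusion of constraint sets is literal; once the two optimization problems are written with a common objective as above, the conclusion is immediate from the monotonicity of the maximum under enlargement of the feasible region.
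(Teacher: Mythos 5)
Your proof is correct and takes essentially the same route as the paper, whose entire proof reads ``Follows from the definitions of $\tau_p(A)$ and $\|A^T\|_p$''; you have simply spelled out that one-line argument (nested feasible sets for a common quotient), including the accurate side remark that the e-matrix hypothesis is never used.
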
 
			\begin{proof}
				Follows from the definitions of $\tau_p(A)$ and $\|A^T\|_p$. 
			\end{proof}		
		The second lemma needed in our analysis was obtained in \cite[Corollary 2.3]{HM6}
		and is a immediate consequence of a well-known theorem about matrix deflation due to Alfred Brauer \cite{B} .\\
			\begin{lemma}\label{ll2}
			Let $A$ be an $\n$ e-matrix with $n\geq2$.
			Let $\lambda_A, \lambda_2, \dots, \lambda_n$ be the eigenvalues of $A$,
			where $\l_A$ is the trivial eigenvalue of $A$.
			Then the matrix $B = A + \alpha J$ where $\alpha \in \mathbb{R}$, has eigenvalues
			$\lambda_B, \lambda_2, \dots, \lambda_n$
			with $\lambda_B = \lambda_A + n\alpha$ as the trivial eigenvalue of $B$.
		\end{lemma}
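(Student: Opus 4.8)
The plan is to realize the perturbation $\alpha J$ as a rank-one update built from the eigenvector $e$, so that the statement becomes an immediate instance of Brauer's deflation theorem. Recall that Brauer's theorem asserts: if a square matrix $A$ has eigenvalues $\lambda_A, \lambda_2, \dots, \lambda_n$ and $x$ is an eigenvector of $A$ associated with $\lambda_A$, then for any column vector $v$ the matrix $A + x v^T$ has eigenvalues $\lambda_A + v^T x, \lambda_2, \dots, \lambda_n$; that is, only the eigenvalue $\lambda_A$ is shifted, while all the others are preserved.

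First I would write $J = e e^T$, the defining rank-one factorization of the all-ones matrix, so that $\alpha J = e (\alpha e)^T$. Since $A$ is an e-matrix, $e$ is an eigenvector of $A$ associated with the trivial eigenvalue $\lambda_A$, so the hypotheses of Brauer's theorem are met with $x = e$ and $v = \alpha e$. Applying the theorem to $B = A + \alpha J = A + e(\alpha e)^T$ yields eigenvalues $\lambda_A + (\alpha e)^T e, \lambda_2, \dots, \lambda_n$. Evaluating the inner product gives $(\alpha e)^T e = \alpha\, e^T e = n\alpha$, so the shifted eigenvalue is $\lambda_A + n\alpha$, and the remaining eigenvalues $\lambda_2, \dots, \lambda_n$ are left unchanged.

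Finally I would verify directly that $B$ is an e-matrix whose trivial eigenvalue is exactly the shifted value $\lambda_B = \lambda_A + n\alpha$. This follows from the one-line computation $Be = Ae + \alpha J e = \lambda_A e + \alpha (n e) = (\lambda_A + n\alpha) e$, which shows that $B$ has constant row-sum $\lambda_A + n\alpha$ with $e$ as the associated eigenvector; hence $\lambda_B$ is the trivial eigenvalue of $B$, matching the shifted eigenvalue produced by Brauer's theorem.

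There is no serious obstacle here, since the result is a direct corollary of Brauer's theorem; the only point demanding care is the bookkeeping of the rank-one structure. One must identify $\alpha J$ precisely as $e(\alpha e)^T$ so that the update vector is aligned with the eigenvector $e$ required by Brauer's hypothesis, and then track the scaling in the inner product $v^T x = \alpha\, e^T e$ to obtain the factor $n$ rather than $1$. Getting this normalization right is exactly what produces the correct shift $\lambda_B = \lambda_A + n\alpha$.
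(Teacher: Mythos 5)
Your proof is correct and follows exactly the route the paper indicates: the paper gives no inline argument but cites this lemma as an immediate consequence of Brauer's deflation theorem (via \cite[Corollary 2.3]{HM6}), which is precisely the theorem you invoke with $x=e$, $v=\alpha e$, and $J=ee^T$. Your write-up simply fills in the bookkeeping the paper delegates to the citation, including the final check that $Be=(\lambda_A+n\alpha)e$, so the approaches coincide.
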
       
		\begin{remark}
			Note that the listed eigenvalues of $A$ in the above lemma are not necessarily distinct.
			Thus, the lemma holds if $\l_A$ is equal to one of the other eigenvalues of $A$. The same applies to $B$.
		\end{remark}
		\begin{lemma}\label{ll4}
			Let $A$ be an $\n$ e-matrix and let $\alpha$ be a real number. Then 
			\begin{equation}\label{F8}
			\tau_p((A+\alpha\,J)^k) = \tau_p(A^k), ~~~k=1, 2, \dots.
			\end{equation}
		\end{lemma}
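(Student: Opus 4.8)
The plan is to reduce everything to the way the transposed operators act on the hyperplane $e^{\perp}=\{x\in\mathbb{R}^n : x^T e=0\}$, which is precisely the domain over which the maximum in (\ref{F22}) is taken. Two preliminary facts carry the argument. First, since $J=ee^T$ is symmetric, $\big((A+\alpha J)^k\big)^T=(A^T+\alpha J)^k$, so it suffices to compare the operator $(A^T+\alpha J)^k$ with $(A^T)^k=(A^k)^T$. Second, and this is where the e-matrix hypothesis is used, the hyperplane $e^{\perp}$ is invariant under $A^T$: from $Ae=\lambda_A e$ we get $e^T A^T=\lambda_A e^T$, hence $e^T(A^T x)=\lambda_A(e^T x)=0$ whenever $x^T e=0$.

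Next I would note that $J$ annihilates $e^{\perp}$, since for $x^T e=0$ we have $Jx=e(e^T x)=0$; therefore $(A^T+\alpha J)x=A^T x$ for every $x\in e^{\perp}$. Combining this with the invariance just established, a short induction on $k$ gives $(A^T+\alpha J)^k x=(A^T)^k x$ for all $x\in e^{\perp}$. In the inductive step one applies $A^T+\alpha J$ to the vector $(A^T)^{k-1}x$, which again lies in $e^{\perp}$ by invariance, so the $\alpha J$ term drops out and exactly one more factor of $A^T$ appears.

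Finally, because the maximum defining $\tau_p$ ranges only over unit vectors $x\in e^{\perp}$, and on that set $(A^T+\alpha J)^k$ and $(A^k)^T$ act identically, the two maxima coincide:
\[
\tau_p\big((A+\alpha J)^k\big)=\max_{\|x\|_p=1,\ x^T e=0}\big\|(A^T+\alpha J)^k x\big\|_p=\max_{\|x\|_p=1,\ x^T e=0}\big\|(A^k)^T x\big\|_p=\tau_p(A^k).
\]

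The one point I would stress, and the only place a naive approach goes wrong, is that one should \emph{not} expect the algebraic identity $(A+\alpha J)^k=A^k+\beta J$ to hold. Already for $k=2$ the cross term $\alpha JA=\alpha\,e(e^T A)$ is a rank-one matrix that is generally not a scalar multiple of $J$ (it would be only if $A$ also had constant column sums). Thus the result cannot be obtained by collapsing the $k$-th power into $A^k$ plus a multiple of $J$ and invoking the $k=1$ case; the correct mechanism is the invariance of $e^{\perp}$ under $A^T$ together with the vanishing of $J$ on $e^{\perp}$, which makes the two operators agree precisely on the subspace that $\tau_p$ sees. No genuine obstacle remains beyond setting up this invariance cleanly.
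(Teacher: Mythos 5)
Your proof is correct, and it takes a mildly but genuinely different route from the paper's. The paper proceeds by expanding $(A+\alpha J)^k$ noncommutatively and killing every $J$-containing term: it first records that $\tau_p(B+\beta J)=\tau_p(B)$, $\tau_p(JB)=0$ and $\tau_p(BJ)=0$ (equations (\ref{F10})--(\ref{F12})), the last using the e-matrix reduction $BJ=\lambda_B J$ together with $J=ee^T$ and $J^k=n^{k-1}J$; every word in the expansion other than $A^k$ then reduces to a scalar multiple of $JA^m$ or of $J$, whose transpose annihilates each $x$ with $x^Te=0$. You avoid the expansion entirely: the invariance of $e^{\perp}$ under $A^T$ (which is exactly the e-matrix hypothesis, transposed) together with $J|_{e^{\perp}}=0$ yields, by a short induction, the pointwise identity $(A^T+\alpha J)^k x=(A^T)^k x$ on $e^{\perp}$, so the two maxima in (\ref{F22}) agree term by term. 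Your version is arguably cleaner: it needs neither the word-reduction identities $AJ=\lambda_A J$, $J^2=nJ$ nor any auxiliary $\tau_p$-vanishing lemmas, and it makes transparent why the lemma holds --- the two transposed operators literally coincide on the subspace over which $\tau_p$ is computed. Your closing caution is also apt and consistent with the paper's bookkeeping: $(A+\alpha J)^k$ is in general $A^k$ plus a combination of terms $cJA^m$, which are rank one but not multiples of $J$ unless $A$ also has constant column sums; this is precisely why the paper needs the vanishing fact (\ref{F11}) for products $JB$ with arbitrary $B$, and not merely the translation invariance (\ref{F10}).
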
    
		\begin{proof}
			First, note that
			\begin{equation}\label{F9}
			J = e\,e^T \text{~~and~~} J^k = n^{k-1}J \text{~~for~~} k=1, 2, \dots.
			\end{equation}
			Therefore, if $B$ is an e-matrix having $\l_B$ as trivial eigenvalue and $\beta$ is a real number, then
			\begin{equation}\label{F10}
			\tau_p(B+\beta\,J) = \underset{\underset{\underset{||x||_p =1}{x^T e =0}}{x\in \mathbb{R}^n}}{\max} ||(B^T + \beta\, e\,e^T)x||_p = \tau_p(B),
			\end{equation}
			\begin{equation}\label{F11}
			\tau_p(J B) = \underset{\underset{\underset{||x||_p =1}{x^T e =0}}{x\in \mathbb{R}^n}}{\max} ||B^T e e^T x||_p = 0
			\end{equation}
			and
			\begin{equation}\label{F12}
			\tau_p(B J) = \tau_p(\l_B J) = |\l_B| \tau_p(J)  = 0.
			\end{equation}
			Note also that $A^m$ is an e-matrix for every $m \in \mathbb{N}$ since $A$ itself is an e-matrix.\\		
			Then the assertion of the lemma follows by expanding the product $(A+\alpha\,J)^k$ and applying (\ref{F10}), (\ref{F11}), (\ref{F12}) and the definition of $\tau_p$ given by (\ref{F22}).			
		\end{proof}
	   \begin{theorem}\label{t5}
		Let $A$ be an $\n$ e-matrix and let $\lambda_A, \lambda_1, \lambda_2, \dots, \lambda_{n-1}$
		be the (not necessarily distinct) eigenvalues of $A$, where $|\lambda_1| \leq |\lambda_2| \leq \dots \leq |\lambda_{n-1}|$ and $\lambda_A$ is the trivial eigenvalue of $A$.
		Then 		
		\begin{equation}\label{f21}
		\lim_{k \to \infty}~ \big(\tau_p(A^k)\big)^{1/k}=|\lambda_{n-1}|.
		\end{equation}	
		In particular, if $A$ is stochastic and irreducible, then (\ref{f21}) provides
		a convergence formula for the absolute value of the subdominant eigenvalue of $A$.	  	       	 
	\end{theorem}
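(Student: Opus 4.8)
The plan is to sandwich the limit between $|\lambda_{n-1}|$ from below and from above, using Corollary \ref{cc2} for the lower estimate and a combination of the shifting Lemmas \ref{ll2} and \ref{ll4} together with Gelfand's formula (Theorem \ref{tt8}) for the upper estimate.

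For the lower bound I would observe that, by the stated ordering, $\lambda_{n-1}$ is one of the non-trivial eigenvalues $\lambda_1, \dots, \lambda_{n-1}$ of $A$. Hence Corollary \ref{cc2} applies directly and gives $|\lambda_{n-1}| \leq \sqrt[k]{\tau_p(A^k)}$ for every $k \in \mathbb{N}$. Taking the $\liminf$ as $k \to \infty$ yields $|\lambda_{n-1}| \leq \liminf_{k\to\infty} (\tau_p(A^k))^{1/k}$, with no existence of the limit yet assumed.

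The upper bound is where the real content lies, and the key device is to shift away the trivial eigenvalue so that the spectral radius drops to $|\lambda_{n-1}|$. I would set $\alpha = -\lambda_A/n$ and $B = A + \alpha J$. By Lemma \ref{ll2}, $B$ is an e-matrix whose eigenvalues are $0, \lambda_1, \dots, \lambda_{n-1}$, so that $\rho(B) = |\lambda_{n-1}|$; this shift is essential precisely because the untouched matrix $A$ could have $|\lambda_A| > |\lambda_{n-1}|$, which would spoil the bound. Lemma \ref{ll4} guarantees that the shift leaves the quantity of interest unchanged, namely $\tau_p(A^k) = \tau_p(B^k)$ for all $k$. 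I then apply Lemma \ref{ll1} to the e-matrix $B^k$ to get $\tau_p(B^k) \leq \|(B^k)^T\|_p = \|(B^T)^k\|_p$, and invoke Gelfand's formula for the $p$-induced matrix norm applied to $B^T$, using $\rho(B^T) = \rho(B)$, to obtain $\lim_{k\to\infty} \|(B^T)^k\|_p^{1/k} = |\lambda_{n-1}|$. Chaining these gives $\limsup_{k\to\infty} (\tau_p(A^k))^{1/k} \leq |\lambda_{n-1}|$.

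Finally I would combine the two estimates: since
$$|\lambda_{n-1}| \leq \liminf_{k\to\infty} (\tau_p(A^k))^{1/k} \leq \limsup_{k\to\infty} (\tau_p(A^k))^{1/k} \leq |\lambda_{n-1}|,$$
all quantities coincide, so the limit exists and equals $|\lambda_{n-1}|$. The stochastic-irreducible specialization then follows because for such matrices the trivial eigenvalue is $1$ and the subdominant eigenvalue is exactly the largest non-trivial eigenvalue in modulus. I expect the only genuinely delicate points to be (i) justifying the eigenvalue-shift step, which is exactly what Lemmas \ref{ll2} and \ref{ll4} are designed to supply, and (ii) phrasing the final argument in terms of $\liminf$ and $\limsup$ so that existence of the limit is \emph{established} rather than tacitly presumed.
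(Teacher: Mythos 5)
Your proof is correct and follows essentially the same route as the paper: a sandwich argument with Corollary \ref{cc2} supplying the lower estimate and Lemma \ref{ll1} together with Gelfand's formula (Theorem \ref{tt8}) supplying the upper one, with Lemmas \ref{ll2} and \ref{ll4} used to shift away the trivial eigenvalue. Two remarks. First, your uniform shift $B = A - \frac{\lambda_A}{n}J$ actually streamlines the paper's proof, which splits into the cases $|\lambda_{n-1}| > |\lambda_A|$ (no shift, Gelfand applied to $A$ directly) and $|\lambda_{n-1}| \leq |\lambda_A|$; moreover, in the second case the paper writes $B = A - \lambda_A J$, which by Lemma \ref{ll2} has trivial eigenvalue $(1-n)\lambda_A$ rather than $0$ --- your coefficient $-\lambda_A/n$ is the correct one, so your version silently repairs this slip, and your $\liminf$/$\limsup$ phrasing also establishes existence of the limit, which the paper's chain of inequalities tacitly presumes. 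Second, a small inaccuracy in your lower bound: if the number $\lambda_{n-1}$ happens to equal $\lambda_A$ (possible since the list is not assumed distinct), it is not a non-trivial eigenvalue in the paper's terminology; but then $\lambda_A$ is not simple, and inequality (\ref{F7}) of Corollary \ref{cc2} gives $|\lambda_{n-1}| = |\lambda_A| \leq \sqrt[k]{\tau_p(A^k)}$ anyway, so the bound holds in all cases once you invoke the correct clause.
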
              
	\begin{proof}	We have two cases.\\		
			\textbf{1) ~~~ $|\lambda_{n-1}| > |\lambda_A|$}.  The spectral radius of $A$ is equal to $|\lambda_{n-1}|$.
			We use Corollary \ref{cc2},  Gelfrand's formula and Lemma \ref{ll1} to obtain
			$$ |\lambda_{n-1}| \leq \lim_{k \to \infty} \big(\tau_p(A^k)\big)^{1/k}  \leq \lim_{k \to \infty} ||(A^T)^k||_p^{1/k} = |\lambda_{n-1}|,$$ which leads to
			$$ \lim_{k \to \infty} \big(\tau_p(A^k)\big)^{1/k} = |\lambda_{n-1}|.$$
			\textbf{2) ~~~ $|\lambda_{n-1}| \leq |\lambda_A|$}.        		 
			We construct the matrix $B = A-\lambda_A\,J$
			which, according to Lemma \ref{ll2}, has eigenvalues 
			$0, \lambda_1,  \lambda_2, \dots, \lambda_{n-1}.\,$
			The trivial eigenvalue of $B$ is $\l_B=0$ and its spectral radius is equal to $|\lambda_{n-1}|$.
			Then we go back to the first case to deduce that
			\begin{equation}\label{f13}
			\lim_{k \to \infty} \big(\tau_p(B^k)\big)^{1/k} = |\lambda_{n-1}|.
			\end{equation}
			Then by Lemma \ref{ll4}, $\tau_p(B^k)=\tau_p((A-\lambda_A\,J)^k) = \tau_p(A^k)$, so that
			(\ref{f13}) becomes
			$$\lim_{k \to \infty} \big(\tau_p(A^k)\big)^{1/k} = |\lambda_{n-1}|. $$ 		
	   		
	\end{proof}
	\begin{remark}
		Equation (\ref{f21}) is particularly significant if $|\l_A|$ is simple and equal to the spectral radius of $A$,
		because it provides a convergence  formula for the second largest eigenvalue of $A$ in absolute value.
		This applies, for example, to irreducible stochastic matrices as mentioned in the theorem itself.
		If $|\l_A| \leq |\l_{n-1}|$, then  the spectral radius of $A$ is equal to $|\l_{n-1}|$
		and  (\ref{f21}) does the same job as Gelfand's formula,
		except it uses a matrix semi-norm instead of a matrix norm.              
	\end{remark}
%%%%%%%%%%%%%%%%%%%%%%%%%%%%%%%%%%
%%%%%%%%%%%%%%%%%%%%%%%%%%%%%%%%%%
%%%%%%%%%%%%%%%%%%%%%%%%%%%%%%%%%%
		Note that the sequence $\sqrt[k]{\tau_p(A^k)}$ converges to $|\l_{n-1}|$, and one may hope that this convergence is monotone non-increasing for every e-matrix $A$, so that every increase of $k$ leads to a bound that is better or equal to the previous one.
		However, this is not always the case as shown by 
		the following counter-example.
	\begin{example}\label{ee2}
		Consider the $4\times4$ circulant matrix
		$$A = \begin{bmatrix}
		4  & 1  & 2 & 3\\
		3  & 4  & 1 & 2\\
		2  & 3  & 4 & 1\\
		1  & 2  & 3 & 4\\
		\end{bmatrix}.
		$$
		We have $\tau_{\infty}(A) = 4, ~\sqrt{\tau_{\infty} (A^2)}\approx 2.83~ , ~\sqrt[3]{\tau_{\infty}(A^3)} \approx 3.17~$ and $~\sqrt[4]{\tau_{\infty}(A^4)} \approx 2.83$.
		Thus, the sequence $\sqrt[k]{\tau_{\infty}(A^k)}$ in not  non-increasing for this particular matrix.
	\end{example}
		However, there are sub-sequences of $\sqrt[k]{\tau_p(A^k)}$ that are indeed monotone non-increasing for every e-matrix $A$. This is a consequence of the sub-multiplicative property of $\{\tau_p(A)\}$ as shown in the proof of the following corollary. 
	\begin{cor}\label{c6}		
		The sequence $\sqrt[2^k]{\tau_p(A^{2^k})}$ is monotone non-increasing for every e-matrix $A$.		
	\end{cor}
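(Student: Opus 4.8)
The plan is to exploit the sub-multiplicative property of $\tau_p$ on $E_n$ (property 5 of Definition \ref{d0}, justified in the discussion preceding Example \ref{e4}) together with the observation, already recorded in the proof of Lemma \ref{ll4}, that $A^m \in E_n$ for every $m \in \mathbb{N}$ whenever $A$ is an e-matrix. Writing $a_k = \tau_p(A^{2^k})$, I would first isolate the one structural identity that drives everything, namely $A^{2^{k+1}} = (A^{2^k})^2$, which expresses the argument at step $k+1$ as the square of the argument at step $k$.

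Next, since $A^{2^k}$ is itself an e-matrix, the hypothesis needed to invoke sub-multiplicativity (which requires at least the left factor to lie in $E_n$) is satisfied with both factors equal to $A^{2^k}$. Applying it to the factorization $(A^{2^k})(A^{2^k})$ gives
$$ a_{k+1} = \tau_p\big((A^{2^k})^2\big) \leq \big(\tau_p(A^{2^k})\big)^2 = a_k^2. $$
Finally, because $x \mapsto x^{1/2^{k+1}}$ is monotone non-decreasing on $[0,\infty)$ and all the $a_k$ are nonnegative, I would raise both sides of $a_{k+1} \leq a_k^2$ to the power $1/2^{k+1}$ to obtain
$$ a_{k+1}^{1/2^{k+1}} \leq \big(a_k^2\big)^{1/2^{k+1}} = a_k^{1/2^k}, $$
which is precisely the asserted non-increasing behaviour of the subsequence $\sqrt[2^k]{\tau_p(A^{2^k})}$.

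I do not expect a genuine obstacle here: the argument is essentially a telescoping consequence of sub-multiplicativity, and the passage to the dyadic subsequence is exactly what makes the exponent bookkeeping collapse cleanly (the factor $2$ from squaring cancels against the doubling of the root index). The only point meriting a word of care is that sub-multiplicativity is available on $E_n$ rather than on all of $M_n(\mathbb{R})$, so one must confirm that the relevant factors are e-matrices; this is immediate since both equal $A^{2^k}$. No convergence statement is needed in this proof, as the value of the limit is already supplied by Theorem \ref{t5}.
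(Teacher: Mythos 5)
Your proposal is correct and is essentially the paper's own proof: both write $A^{2^{k+1}} = A^{2^k}A^{2^k}$, apply the sub-multiplicativity of $\tau_p$ on $E_n$ (with the factor $A^{2^k}$ itself an e-matrix) to get $\tau_p(A^{2^{k+1}}) \leq \big(\tau_p(A^{2^k})\big)^2$, and then take $2^{k+1}$-th roots. Your extra remarks checking that both factors lie in $E_n$ and that $x \mapsto x^{1/2^{k+1}}$ is monotone are sound but add nothing beyond the paper's argument.
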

	\begin{proof}
		The matrix $A^{2^k}$ is as well an e-matrix and we have
		$$
		\begin{array}{ccl}
		\tau_p(A^{2^{k+1}}) &=& \tau_p(A^{2^k} A^{2^k})\\
		&\leq& \big(\tau_p(A^{2^k})\big)^2, ~~~\text{by the submultiplicative property of~~} \tau_p. \\ 
		\end{array}
		$$
		It follows that,
		$$ \sqrt[2^{k+1}]{\tau_p(A^{2^{k+1}})} \leq \sqrt[2^{k+1}]{(\tau_p(A^{2^k}))^2} 
		= \sqrt[2^k]{\tau_p(A^{2^k})}.$$   			
	\end{proof}
		This monotonic decrease is important for its numerical applications. It can be used,  for example,
		to improve the upper bound on the largest non-trivial eigenvalue in absolute value of e-matrices.
	\begin{example}\label{e2}
		Let $$ A = \begin{bmatrix}
		1 &~~~0  & ~1\\
		2 & -1    & ~1\\
		0 &~~~1  & ~1\\
		\end{bmatrix}.
		$$
		The spectrum  of $A$ is equal to $\{2, -1, 0\}$.
		The second largest eigenvalue of $A$ in absolute value is $\l = -1$.
		Let's take some powers of $A$. For example, $$ A^3 = \begin{bmatrix}
		3 & ~1  & ~4\\
		4 & ~0    & ~4\\
		2 & ~2  & ~4\\
		\end{bmatrix}
		\text{~~~~and~~~~}
		A^{10} = \begin{bmatrix}
		341 & 171 & 512\\
		340 & 172 & 512\\
		342 & 170 & 512\\
		\end{bmatrix}.
		$$
		\begin{center}
			\begin{tabular}{|c|c|c|c|}\hline
				              $A^k$         	 		&   $A$   &  $A^3$  & $A^{10}$   \\ \hline
				$\tau_{\infty}(A^k)$                  	&   $2$   &   $2$   &  $2$       \\ \hline 
				$\sqrt[k]{\tau_{\infty}(A^k)}\approx$ 	&   $2$   &  $1.26$ & $1.07$     \\ \hline
								$\tau_1(A^k)$          	&   $2$   &   $2$   & $2$        \\ \hline
				$\sqrt[k]{\tau_1(A^k)} \approx$ 	&   $2$   &  $1.26$ & $1.07$     \\ \hline
			\end{tabular}
		\end{center}		
		Applications  of $\tau_1$ and $\tau_{\infty}$ to $A^{10}$ led to a better upper bound.
		Perhaps, better upper bounds are obtained by using powers of $A$ higher than $10$.
		Theorem \ref{t5} and Corollary \ref{c6} ensures the existence of higher powers of $A$ allowing for bounds close to  $|\l| =1$ as much as desired.		
	\end{example}
	\end{subsection}
%%%%%%%%%%%%%%%%%%%%%%%%%%%%%%%%%%%%%%%%%%
%%%%%%%%%%%%%%%%%%%%%%%%%%%%%%%%%%%%%%%%%%
%%%%%%%%%%%%%%%%%%%%%%%%%%%%%%%%%%%%%%%%%%
%%%%%%%%%%%%%%%%%%%%%%%%%%%%%%%%%%%%%%%%%%
	\begin{subsection}{A criterion for the spectral radius of an e-matrix to be simple}
		From the Perron-Frobenius theorem, we know that the spectral radius of an irreducible nonnegative matrix is simple. That is, it's algebraic multiplicity is equal to $1$. Here, we state  a criterion for  the trivial eigenvalue $\l_A$ of an e-matrix $A$ (not necessary nonnegative) to be simple. This result follows from Theorem \ref{tt9}.
	\begin{cor}\label{c5}
		Let $A=[a_{ij}]$ be an $\n$ e-matrix and let $\lambda_A$ be its trivial eigenvalue.
		Suppose that $|\lambda_A| > \tau_p(A)$ for some $p \in \mathbb{N}\cup \{0\}$. Then
		$\lambda_A$ is simple and it is the largest eigenvalue of $A$ in absolute value.
		Moreover, if $\l$ is any non-trivial eigenvalue of $A$, then
		$|\lambda_A|-|\l| \geq  |\lambda_A|-\tau_p(A)>0$.
	\end{cor}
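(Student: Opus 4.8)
The plan is to obtain all three assertions as direct consequences of Theorem \ref{tt9}, which is already at our disposal; no new machinery is needed. The hypothesis $|\lambda_A| > \tau_p(A)$ is the single input, and each of the two inequalities in Theorem \ref{tt9} converts it into one of the claimed conclusions.

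First I would establish that $\lambda_A$ is simple by contraposition. The second part of Theorem \ref{tt9} asserts that if $\lambda_A$ is not simple, then $|\lambda_A| \leq \tau_p(A)$. Since our hypothesis gives the reverse strict inequality $|\lambda_A| > \tau_p(A)$, the trivial eigenvalue cannot fail to be simple. Next, for the claim that $\lambda_A$ is the largest eigenvalue in absolute value, I would apply the first inequality of Theorem \ref{tt9}: every non-trivial eigenvalue $\lambda$ satisfies $|\lambda| \leq \tau_p(A)$. Chaining this with the hypothesis yields $|\lambda| \leq \tau_p(A) < |\lambda_A|$, so each non-trivial eigenvalue is strictly dominated in modulus by $\lambda_A$; together with simplicity this pins $\lambda_A$ down as the unique eigenvalue of maximal modulus.

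The final gap estimate then drops out by rearrangement: for an arbitrary non-trivial eigenvalue $\lambda$ the bound $|\lambda| \leq \tau_p(A)$ gives $|\lambda_A| - |\lambda| \geq |\lambda_A| - \tau_p(A)$, and the right-hand quantity is positive precisely by the hypothesis. I do not anticipate a genuine obstacle here; the argument is a short deduction from Theorem \ref{tt9}. The only points requiring care are that the strict inequality in the hypothesis is exactly what promotes the weak bounds of Theorem \ref{tt9} to the strict separation needed for uniqueness of the dominant eigenvalue, and that \emph{simple} must be read as algebraic multiplicity one, consistently with Lemma \ref{ll3} and Theorem \ref{tt10} (from which the second inequality of Theorem \ref{tt9} ultimately derives).
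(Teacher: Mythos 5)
Your proposal is correct and follows exactly the route the paper takes: the paper's proof of Corollary \ref{c5} is the one-line ``Follows, immediately, from Theorem \ref{tt9},'' and your write-up simply makes explicit the contrapositive of inequality (\ref{F5}) for simplicity and the chaining of inequality (\ref{F4}) with the hypothesis for dominance and the gap estimate. Nothing is missing; you have just unpacked the same deduction the authors left implicit.
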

	\begin{proof}
		Follows, immediately, from Theorem \ref{tt9}.
	\end{proof}
	\begin{example} Let
		$$A=
		\begin{bmatrix}
		~~5  & 3 & -1   &  ~~~2\\
		~~3  & 5 &  ~~3 & -2  \\
		~~3  & 3 & ~~3  &  ~~~0\\
		-2    & 5 & ~~2  &  ~~~4\\
		\end{bmatrix}
		$$
		We can see that $\lambda_A = 9$ and $\tau_{\infty}(A)=8$.
		According to Corollary \ref{c5}, $\lambda_A = 9$ is simple and it is the largest eigenvalue of $A$. The gap between  $\lambda_A = 9$ and the nearest  eigenvalue to it in absolute value is greater than or equal to $\lambda_A - \tau_{\infty}(A) = 1$.       	
		\end{example}
	
		At the end of this section, we pose two       
		open questions about  $\tau_1(A)$ and $\tau_{\infty}(A)$.
		\begin{openquestion}
		Let $A$ be an e-matrix.
		Find a sufficient condition or, if possible, a sufficient and necessary condition
		for $\tau_{\infty}(A^k)$ or $\tau_1(A^k)$ to be constant in terms of $k$, ie,
	\begin{equation}
		\tau_1(A) = \tau_1(A^2) = \tau_1(A^3) = \dots
	\end{equation}
	or
	\begin{equation}\label{f28}
	\tau_{\infty}(A) = \tau_{\infty}(A^2) = \tau_{\infty}(A^3) = \dots
	\end{equation}
	\end{openquestion}
	\begin{openquestion}
		Let $A$ be an e-matrix.
		Prove or disprove the following propositions:\\
		1)~~~If $\tau_1(A) = \tau_1(A^2)$, then $\tau_1(A) = \tau_1(A^k)$ for every $k \in \mathbb{N}$.\\
		2)~~~If $\tau_{\infty}(A) = \tau_{\infty}(A^2)$, then $\tau_{\infty}(A) = \tau_{\infty}(A^k)$ for every $k \in \mathbb{N}$.
	\end{openquestion}

	\begin{example}\label{e3}
		The matrix $$ A = \begin{bmatrix}
		1 &~~~0  & ~1\\
		2 & -1    & ~1\\
		0 &~~~1  & ~1\\
		\end{bmatrix},
		$$
		used in Example \ref{e2}, seems to satisfy (\ref{f28}) since $\tau_{\infty}(A) = \tau_{\infty}(A^2) = \dots = \tau_{\infty}(A^{10}) = 2$.\\
		Just a permutation of the elements of $A$ yields the e-matrix 
		$$ 
		B = \begin{bmatrix}
		1 &~~~1  & ~0\\
		2 & -1    & ~1\\
		0 &~~~1  & ~1\\
		\end{bmatrix},
		$$
		which does not have the same property as $A$ since $\tau_{\infty}(B) = 2$ and $\tau_{\infty}(B^2) = 4$.
	\end{example}   
	\end{subsection}     
	\end{section}
%%%%%%%%%%%%%%%%%%%%%%%%%%%%%%%%%%%%
%%%%%%%%%%%%%%%%%%%%%%%%%%%%%%%%%%%%
%%%%%%%%%%%%%%%%%%%%%%%%%%%%%%%%%%%%
\begin{section}{Using $\{\tau_p\}$ to bound the smallest non-trivial eigenvalue in absolute value of e-matrices}
	%%%%%%%%%%%%%%%%%%%%%%%%%%%%%%%%%%%%%%%%%%%%%%%%%%%%%%%%
	%%%%%%%%%%%%%%%%%%%%%%%%%%%%%%%%%%%%%%%%%%%%%%%%%%%%%%%%
	%%%%%%%%%%%%%%%%%%%%%%%%%%%%%%%%%%%%%%%%%%%%%%%%%%%%%%%%
	%%%%%%%%%%%%%%%%%%%%%%%%%%%%%%%%%%%%%%%%%%%%%%%%%%%%%%%%
	\begin{subsection}{Non-singular e-matrices}
		Let $A$ be an $\n$ nonsingular e-matrix and let $B = A^{-1}$ be its inverse matrix.
		Suppose that the spectrum of $A$ consists of $\{\l_A, \l_1, \l_2, \dots, \l_{n-1}\}$,
		with $\l_A$ being the trivial eigenvalue of $A$ and
		\begin{equation}\label{F15}
		|\l_1| \leq |\l_2| \leq \dots \leq |\l_{n-1}|.
		\end{equation}
		Then $B$ itself is an e-matrix having spectrum $\Big\{ \frac{1}{\l_A}, \frac{1}{\l_1}, \frac{1}{\l_2}, \dots, \frac{1}{\l_{n-1}}\Big\}$, where $\l_B = \frac{1}{\l_A}$ is the trivial eigenvalue of $B$.  From (\ref{F15}), we have		 
		\begin{equation}\label{F16}
		\frac{1}{|\l_{n-1}|} \leq \frac{1}{|\l_{n-2}|} \leq \dots \leq \frac{1}{|\l_1|}.
		\end{equation}  
		By using (\ref{F16}) and applying Corollary \ref{cc2} to the e-matrix $B$, we obtain a lower bound for $\l_1$,  stated as follows.
		\begin{theorem}\label{t6}
			Let $A$ be an $\n$ nonsingular e-matrix having spectrum\\
			$\{\l_A, \l_1, \l_2, \dots, \l_{n-1}\}$,
			such that $\l_A$ is the trivial eigenvalue of $A$ and
			\begin{equation*}
			|\l_1| \leq |\l_2| \leq \dots \leq |\l_{n-1}|.
			\end{equation*}
			Then
			\begin{equation}\label{F17}
			|\l_1| \geq \frac{1}{\sqrt[k]{\tau_p(A^{-k})}},  ~~~ k = 1, 2, \dots.
			\end{equation}
		\end{theorem}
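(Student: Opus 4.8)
The plan is to reduce the statement to a single application of Corollary \ref{cc2} to the inverse matrix $B = A^{-1}$, since the bulk of the structural groundwork is already laid out in the discussion preceding the theorem. First I would record that $B$ is itself an e-matrix: because $A$ is nonsingular we have $\lambda_A \neq 0$, and $Ae = \lambda_A e$ gives $Be = A^{-1}e = \lambda_A^{-1} e$, so $B$ has constant row sum $\lambda_A^{-1}$ and lies in $E_n$ with trivial eigenvalue $\lambda_B = 1/\lambda_A$. This is exactly the closure of the nonsingular e-matrices under inversion noted in the introduction and reproduced in the preamble (\ref{F15})--(\ref{F16}).

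Next I would pin down the non-trivial eigenvalues of $B$. Since $A$ is invertible, the eigenvalues of $B$ are the reciprocals $1/\lambda_A, 1/\lambda_1, \dots, 1/\lambda_{n-1}$, and as $1/\lambda_A$ is the trivial one, the reciprocals $1/\lambda_1, \dots, 1/\lambda_{n-1}$ of the non-trivial eigenvalues of $A$ are precisely the non-trivial eigenvalues of $B$. By the ordering (\ref{F15}), taking reciprocals reverses the inequalities as in (\ref{F16}), so $1/\lambda_1$ is the non-trivial eigenvalue of $B$ of \emph{largest} modulus. The core step is then to feed this eigenvalue into inequality (\ref{F6}) of Corollary \ref{cc2} applied to $B$: for every $k \in \mathbb{N}$,
$$\frac{1}{|\lambda_1|} \;=\; \left|\frac{1}{\lambda_1}\right| \;\leq\; \sqrt[k]{\tau_p(B^k)} \;=\; \sqrt[k]{\tau_p\big((A^{-1})^k\big)} \;=\; \sqrt[k]{\tau_p(A^{-k})}.$$
Taking reciprocals of both sides gives precisely (\ref{F17}).

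I do not anticipate a genuine obstacle; the only point deserving care is verifying that $1/\lambda_1$ truly qualifies as a non-trivial eigenvalue of $B$ so that Corollary \ref{cc2} is applicable. This holds because $\lambda_1$ is by hypothesis a non-trivial eigenvalue of $A$, hence distinct from $\lambda_A$, whence $1/\lambda_1 \neq 1/\lambda_A = \lambda_B$. A secondary detail is that dividing requires $\tau_p(A^{-k}) > 0$, but this is automatic: the displayed inequality has a strictly positive left-hand side, forcing $\sqrt[k]{\tau_p(A^{-k})} > 0$, so the final reciprocal step is legitimate and no separate positivity argument is needed.
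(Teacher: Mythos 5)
Your proposal follows essentially the same route as the paper: the paper's ``proof'' of Theorem \ref{t6} is exactly the discussion preceding it --- form $B = A^{-1}$, note $B$ is an e-matrix with trivial eigenvalue $\lambda_B = 1/\lambda_A$ and spectrum consisting of the reciprocals, use the reversal of the ordering (\ref{F15}) into (\ref{F16}), and apply Corollary \ref{cc2} to $B$ to get $1/|\lambda_1| \leq \sqrt[k]{\tau_p(A^{-k})}$, then invert. Your handling of the positivity of $\tau_p(A^{-k})$ is a nice touch the paper leaves implicit.

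One caveat in your justification that $1/\lambda_1$ is a non-trivial eigenvalue of $B$: you assert that $\lambda_1$ is ``by hypothesis'' non-trivial, hence numerically distinct from $\lambda_A$. The theorem statement does not actually guarantee this --- in the paper's conventions the eigenvalue list carries multiplicities (compare the statement of Theorem \ref{t5}), so if $\lambda_A$ is not simple, copies of $\lambda_A$ appear among $\lambda_1, \dots, \lambda_{n-1}$, and in particular one can have $\lambda_1 = \lambda_A$ as numbers, making $1/\lambda_1 = \lambda_B$ the trivial eigenvalue of $B$. In that degenerate case the first clause (\ref{F6}) of Corollary \ref{cc2} does not apply to $1/\lambda_1$; however, $\lambda_B$ then has algebraic multiplicity at least $2$, so the second clause (\ref{F7}) of Corollary \ref{cc2} delivers the same inequality $|\lambda_B| \leq \sqrt[k]{\tau_p(B^k)}$. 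This is precisely what the remark following Theorem \ref{t6} in the paper is pointing at when it says the theorem remains valid even when $|\lambda_1| = |\lambda_A|$, ``ensured by Corollary \ref{cc2}.'' Adding one sentence invoking (\ref{F7}) in this case would close the gap; otherwise your argument is sound.
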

		\begin{rem}
			Even in the case where $|\l_1| = |\l_A|$, the theorem still valid. This is ensured by Corollary \ref{cc2}.
		\end{rem}
		By applying Theorem \ref{t5} to the inverse matrix of $A$, we obtain a convergence formula for $|\l_1|$.
		\begin{theorem}\label{t7}
			Let $A$ be an $\n$ nonsingular e-matrix having spectrum\\
			$\{\l_A, \l_1, \l_2, \dots, \l_{n-1}\}$,
			such that $\l_A$ is the trivial eigenvalue of $A$ and
			\begin{equation*}
			|\l_1| \leq |\l_2| \leq \dots \leq |\l_{n-1}|.
			\end{equation*}
			Then
			\begin{equation}\label{F18}
			\lim_{k \to \infty}~ \big(\tau_p(A^{-k})\big)^{1/k}=\frac{1}{|\lambda_1|}.
			\end{equation}
		\end{theorem}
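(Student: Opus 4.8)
The plan is to apply Theorem \ref{t5} directly to the inverse matrix $B = A^{-1}$. The groundwork for this has already been laid in the discussion preceding Theorem \ref{t6}: since $A$ is a nonsingular e-matrix, $B$ is itself an e-matrix, and its spectrum is $\left\{\frac{1}{\lambda_A}, \frac{1}{\lambda_1}, \dots, \frac{1}{\lambda_{n-1}}\right\}$ with $\frac{1}{\lambda_A}$ as its trivial eigenvalue. Thus the non-trivial eigenvalues of $B$ are precisely $\frac{1}{\lambda_1}, \dots, \frac{1}{\lambda_{n-1}}$, and the whole task reduces to identifying the largest of these in absolute value and invoking the convergence formula already proved for powers of an e-matrix.

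First I would determine which non-trivial eigenvalue of $B$ has the largest modulus, since that is the quantity to which Theorem \ref{t5} converges. Passing to reciprocals reverses the ordering (\ref{F15}): from $|\lambda_1| \leq |\lambda_2| \leq \dots \leq |\lambda_{n-1}|$ we obtain (\ref{F16}), namely $\frac{1}{|\lambda_{n-1}|} \leq \dots \leq \frac{1}{|\lambda_1|}$. Hence among the non-trivial eigenvalues of $B$, the one of largest absolute value is $\frac{1}{\lambda_1}$, with modulus $\frac{1}{|\lambda_1|}$. Relabelling the non-trivial eigenvalues of $B$ in non-decreasing order of absolute value puts them in the form required by the hypotheses of Theorem \ref{t5}, with $\frac{1}{\lambda_1}$ occupying the role of the subdominant (largest-modulus non-trivial) eigenvalue.

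Applying Theorem \ref{t5} to $B$ then yields
$$\lim_{k \to \infty}~ \big(\tau_p(B^k)\big)^{1/k} = \frac{1}{|\lambda_1|}.$$
Finally, since $B^k = (A^{-1})^k = A^{-k}$, we have $\tau_p(B^k) = \tau_p(A^{-k})$ for every $k$, and substituting into the displayed limit gives (\ref{F18}), completing the argument.

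The proof is essentially a bookkeeping exercise, so I do not anticipate a substantive obstacle; the only point requiring genuine care is the reversal of the index order when passing from $A$ to $B = A^{-1}$, which is exactly what guarantees that the limit equals $\frac{1}{|\lambda_1|}$, the reciprocal of the \emph{smallest} non-trivial eigenvalue of $A$, rather than $\frac{1}{|\lambda_{n-1}|}$. I would also note that Theorem \ref{t5} imposes no condition on the magnitude of the trivial eigenvalue relative to the non-trivial ones, so the borderline case $|\lambda_1| = |\lambda_A|$ needs no separate treatment here.
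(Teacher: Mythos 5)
Your proposal is correct and follows exactly the paper's route: the paper also proves Theorem \ref{t7} by applying Theorem \ref{t5} to $B = A^{-1}$, using the previously established facts that $B$ is an e-matrix with trivial eigenvalue $\frac{1}{\lambda_A}$ and the reversed ordering (\ref{F16}), so that the largest non-trivial eigenvalue of $B$ in absolute value is $\frac{1}{|\lambda_1|}$. Your closing remark that the case $|\lambda_1| = |\lambda_A|$ needs no special treatment is likewise consistent with Theorem \ref{t5}, whose proof covers both magnitude cases of the trivial eigenvalue.
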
		
	\end{subsection}
	%%%%%%%%%%%%%%%%%%%%%%%%%%%%%%%%%%%%%%%%%%%%%%%%%%%%%%%%
	%%%%%%%%%%%%%%%%%%%%%%%%%%%%%%%%%%%%%%%%%%%%%%%%%%%%%%%%
	%%%%%%%%%%%%%%%%%%%%%%%%%%%%%%%%%%%%%%%%%%%%%%%%%%%%%%%%
	%%%%%%%%%%%%%%%%%%%%%%%%%%%%%%%%%%%%%%%%%%%%%%%%%%%%%%%%
	\begin{subsection}{Singular e-matrices with simple trivial eigenvalue equal to $0$}
		Let $A$ be an $\n$ singular e-matrix having spectrum $\{0, \l_1, \l_2, \dots, \l_{n-1}\}$ with $\l_A=0$ is the trivial eigenvalue of $A$ and 
		\begin{equation*}
		|\l_1| \leq |\l_2| \leq \dots \leq |\l_{n-1}|.
		\end{equation*}
		Suppose that $\l_A =0$ is simple.
		According to Lemma \ref{ll2}, for every nonzero real number $\a$, the e-matrix $B_{\a}=A+\alpha J$ is nonsingular with  spectrum
		$\{\alpha, \l_1, \l_2, \dots, \l_{n-1}\},$ where $\l_{B_{\a}}=\alpha$ is the trivial eigenvalue of $B_{\a}$.
		Then we use Theorem \ref{t6} and Theorem \ref{t7} to obtain:
		\begin{cor}\label{c7}
			Let $A$ be a $\n$ singular e-matrix having spectrum\\
			$\{0, \l_1, \l_2, \dots, \l_{n-1}\}$.
			Suppose that $\l_A =0$ is simple and
			\begin{equation*}
			 |\l_1| \leq |\l_2| \leq \dots \leq |\l_{n-1}|.
			\end{equation*}
			Then for every nonzero real $\a$
			\begin{equation}\label{F19}
			|\l_1| \geq \frac{1}{\sqrt[k]{\tau_p((A+\alpha J)^{-k})}},  ~~~ k = 1, 2, \dots.
			\end{equation}
			and
			\begin{equation}\label{F20}
			\lim_{k \to \infty}~ \big(\tau_p((A+\alpha J)^{-k})\big)^{1/k}=\frac{1}{|\lambda_1|}.
			\end{equation}
		\end{cor}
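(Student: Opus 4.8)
The plan is to reduce the singular case to the nonsingular case already settled by Theorems \ref{t6} and \ref{t7}, using the deflation Lemma \ref{ll2}. The key preliminary observation is that since $A$ is a singular e-matrix whose trivial eigenvalue $\lambda_A = 0$ is \emph{simple}, the remaining eigenvalues $\lambda_1, \dots, \lambda_{n-1}$ must all be nonzero. This is exactly what will guarantee nonsingularity after an appropriate shift.

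First I would fix a nonzero real $\alpha$ and form $B_\alpha = A + \alpha J$. By Lemma \ref{ll2}, $B_\alpha$ is an e-matrix whose spectrum is $\{\alpha, \lambda_1, \dots, \lambda_{n-1}\}$, with $\lambda_{B_\alpha} = \alpha$ its trivial eigenvalue. Because $\alpha \neq 0$ and each $\lambda_i \neq 0$, the matrix $B_\alpha$ has no zero eigenvalue and is therefore nonsingular, so $(A+\alpha J)^{-1}$ exists and every quantity $\tau_p((A+\alpha J)^{-k})$ appearing in the statement is well defined.

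Next I would apply Theorem \ref{t6} directly to the nonsingular e-matrix $B_\alpha$. Its non-trivial eigenvalues are exactly $\lambda_1, \dots, \lambda_{n-1}$, already ordered by $|\lambda_1| \leq \dots \leq |\lambda_{n-1}|$, so the smallest non-trivial eigenvalue in absolute value is $\lambda_1$. Theorem \ref{t6} then yields $|\lambda_1| \geq 1/\sqrt[k]{\tau_p(B_\alpha^{-k})}$ for every $k \in \mathbb{N}$, which, upon substituting $B_\alpha = A + \alpha J$, is precisely (\ref{F19}). Applying Theorem \ref{t7} to $B_\alpha$ in the same manner gives $\lim_{k \to \infty}\big(\tau_p(B_\alpha^{-k})\big)^{1/k} = 1/|\lambda_1|$, which is (\ref{F20}).

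There is essentially no hard step here: the corollary is a clean instantiation of the two nonsingular-case theorems at the deflated matrix $B_\alpha$. The only point needing care — and the sole place an error could slip in — is confirming that $\lambda_1$ remains the minimizer in absolute value among the non-trivial eigenvalues of $B_\alpha$, i.e.\ that replacing $\lambda_A = 0$ by $\alpha$ does not disturb the ordering of $\lambda_1, \dots, \lambda_{n-1}$. This is immediate, since Lemma \ref{ll2} leaves $\lambda_1, \dots, \lambda_{n-1}$ untouched and only relocates the trivial eigenvalue from $0$ to $\alpha$.
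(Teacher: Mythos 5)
Your proof is correct and follows exactly the paper's route: the paper likewise uses Lemma \ref{ll2} to shift $A$ to the nonsingular e-matrix $B_\alpha = A + \alpha J$ (nonsingularity resting on the simplicity of $\lambda_A = 0$, as you note) and then applies Theorems \ref{t6} and \ref{t7} to $B_\alpha$. Your added remark making explicit why simplicity of the zero eigenvalue forces $\lambda_1, \dots, \lambda_{n-1} \neq 0$ is a point the paper leaves implicit, but the argument is the same.
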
	
		Applications and examples related to the above results are provided in the next section.
	\end{subsection}
	%%%%%%%%%%%%%%%%%%%%%%%%%%%%%%%%%%%%%%%%%%%%%%%%%%%%%%%%
	%%%%%%%%%%%%%%%%%%%%%%%%%%%%%%%%%%%%%%%%%%%%%%%%%%%%%%%%
	%%%%%%%%%%%%%%%%%%%%%%%%%%%%%%%%%%%%%%%%%%%%%%%%%%%%%%%%
	%%%%%%%%%%%%%%%%%%%%%%%%%%%%%%%%%%%%%%%%%%%%%%%%%%%%%%%%
\end{section}
%%%%%%%%%%%%%%%%%%%%%%%%%%%%%%%%%%%%%%%%%%%%%%%%%%%%%%%%
%%%%%%%%%%%%%%%%%%%%%%%%%%%%%%%%%%%%%%%%%%%%%%%%%%%%%%%%
%%%%%%%%%%%%%%%%%%%%%%%%%%%%%%%%%%%%%%%%%%%%%%%%%%%%%%%%
%%%%%%%%%%%%%%%%%%%%%%%%%%%%%%%%%%%%%%%%%%%%%%%%%%%%%%%%
\begin{section}{Applications of $\{\tau_p\}$ in spectral graph theory}	
	The Laplacian matrix $L$ of a simple graph $G$ is singular and symmetric. It is an e-matrix with trivial eigenvalue $\l_L=0$ and every non-trivial eigenvalue of $L$ is positive. Let $\{ 0 \leq \l_2 \leq \l_3 \leq \dots \leq \l_n\}$ be the spectrum of $L$. 
	Eigenvalues with particular importance are $\l_n$ and $\l_2$. The eigenvalue $\l_2$ is called the algebraic connectivity of the graph $G$ and its is strictly positive if and only if $G$ is connected. 
	By Corollary \ref{cc2} and Corollary \ref{c7},  both $\l_n$ and $\l_2$ can be bounded by using the functions $\tau_p$.
	\begin{subsection}{Bounding the largest eigenvalue of the Laplacian matrix}			
		Upper bounds for $\l_n$ are obtained by straightforward application of Corollary \ref{cc2},
		\begin{equation}\label{F14}
		\l_n \leq \sqrt[k]{\tau_p(L^k)}, ~~~ k=1, 2, \dots.
		\end{equation}
		\begin{example}\label{ee3}
			In \cite{HM4}, we considered the Laplacian matrix
			$$L  \, = \,  \begin{bmatrix}
			\ \ 3  &   -1   &   -1   &\ \ 0  & -1    &\ \ 0  &\ \ 0  \\
			-1      & \ \ 3  &\ \ 0   &    -1 &\ \ 0  &\ \ 0  &   -1  \\
			-1      &\ \ 0   &\ \ 4   &    -1 &\ \  0 & -1    &   -1  \\
			\ \ 0  &   -1   &   -1   &\ \ 4  & -1    & -1    &\ \ 0  \\
			-1      &\ \ 0   &\ \ 0   &  -1   &\ \ 3  & -1    &\ \ 0  \\
			\ \ 0  &\ \ 0   &   -1   &  -1   &   -1  &\ \ 4  &   -1  \\
			\ \ 0  &   -1   &   -1   &\ \ 0  &\ \ 0  & -1    &\ \ 3   \\
			\end{bmatrix}\\.
			$$
			There, upper-bounds were obtained for the largest eigenvalue $\l_7 \approx 6.21$ of $L$ by calculating $\r(L)$ and $\sqrt{\r(L^2)}$. Note that $\r(L) = \tau_{\infty}(L)$ by Theorem \ref{t1} ($\r$ is an explicit form of $\tau_{\infty}$).\\
			Now that we have in hand Corollary \ref{cc2}, we extend this example by calculating  both $\tau_{\infty}$ and $\tau_1$ up to $L^3$. 
			\begin{center}
				\begin{tabular}{|c|c|c|c|}\hline					
					$L^k$                       	 & $L$   & $L^2$ & $L^3$     \\ \hline 
					$\tau_1(L^k)$					 & $7$   & $46$  & $294$     \\ \hline
					$\sqrt[k]{\tau_1(L^k)}$          & $7$   & $6.78$  & $6.65$  \\ \hline
					$\tau_{\infty}(L^k)$			 & $7$   & $46$  & $294$     \\ \hline
					$\sqrt[k]{\tau_{\infty}(L^k)}$   & $7$   & $6.78$  & $6.65$  \\ \hline
				\end{tabular}
			\end{center}		
		\end{example}
	\end{subsection}
	%%%%%%%%%%%%%%%%%%%%%%%%%%%%%%%%%%%%%%%%%%
	%%%%%%%%%%%%%%%%%%%%%%%%%%%%%%%%%%%%%%%%%%
	%%%%%%%%%%%%%%%%%%%%%%%%%%%%%%%%%%%%%%%%%%
	%%%%%%%%%%%%%%%%%%%%%%%%%%%%%%%%%%%%%%%%%%
	\begin{subsection}{Bounding the algebraic connectivity}
		Let $G$ be a connected simple graph and let $L$ be is its $\n$ Laplacian matrix with spectrum
		$\{0 < \l_2 \leq \l_3 \leq \dots \leq \l_n\}$. In \cite[Example 3.10]{HM4}, we have shown how to use the generalized inverse of $L$ to give a lower bound for its algebraic connectivity $\l_2$. Here we  present two different techniques to generate lower bounds for $\l_2$  by Corollary \ref{cc2}.
		%%%%%%%%%%%%%%%%%%%%%%%%%%%%%%%%%%%%%%%%%%
		%%%%%%%%%%%%%%%%%%%%%%%%%%%%%%%%%%%%%%%%%%
		%%%%%%%%%%%%%%%%%%%%%%%%%%%%%%%%%%%%%%%%%%
		%%%%%%%%%%%%%%%%%%%%%%%%%%%%%%%%%%%%%%%%%%
		\begin{enumerate}
			%%%%%%%%%%%%%%%%%%%%%%%%%%%%%%%%%%%%%%%%%%
			%%%%%%%%%%%%%%%%%%%%%%%%%%%%%%%%%%%%%%%%%%
			%%%%%%%%%%%%%%%%%%%%%%%%%%%%%%%%%%%%%%%%%%
			%%%%%%%%%%%%%%%%%%%%%%%%%%%%%%%%%%%%%%%%%%

			\item \textbf{First method}. By using inequality (\ref{F19}) from Corollary \ref{c7}, we obtain
			\begin{equation}\label{F23}
			|\l_2| \geq \frac{1}{\sqrt[k]{\tau_p((L+\alpha J)^{-k})}}~,
			\end{equation}			
			where $~k=1, 2, \dots~$ and $~\a$ is any nonzero real number.		
			\begin{example}\label{e6}
				We use the same matrix $L$ as in \cite[Example 3.10]{HM4}.
				$$  L \,= \, \begin{bmatrix}
				\ \ 3  &   -1   &   -1     &  -1    \\
				-1    & \ \ 2  & \ \ 0    &  -1    \\
				-1    & \ \ 0  & \ \ 1    &\ \ 0    \\
				-1    &   -1   & \ \ 0    &\ \ 2    \\
				\end{bmatrix}.$$
				The spectrum of $L$ is $\{0, 1, 3, 4 \}$ and its algebraic connectivity is $\l_2=1$.
				We choose $\a=1$ so that
				$$  L+J \,= \, \begin{bmatrix}
				4    &  0  & 0    & 0    \\
				0    &  3  & 1    & 0    \\
				0    &  1  & 2    & 1    \\
				0    &  0  & 1    & 3    \\
				\end{bmatrix} \text{~~~and~~~} 
				(L+J)^{-1} \,= \, \begin{bmatrix}
				1/4    &  0     & 0       & 0       \\
				0      &  5/12  & -1/4    & 1/12    \\
				0      &  -1/4  & 3/4     & -1/4    \\
				0      &  1/12  & -1/4    & 5/12    \\
				\end{bmatrix}.
				$$				
				Letting $M = (L+J)^{-1}$,
				we have $\tau_1(M) =1$ ~and~ $1=\l_2 \geq \frac{1}{\tau_1(M)} = 1$.\\
				Using $\tau_{\infty}$, we have  $\tau_{\infty}(M) = 1.25$ ~and~ $1=\l_2 \geq \frac{1}{\tau_{\infty}(M)} = 0.8$.\\
				The lower bound $\frac{1}{\tau_1(M)} = 1$ is optimal for this particular matrix.
			\end{example}
			%%%%%%%%%%%%%%%%%%%%%%%%%%%%%%%%%%%%%%%%%%
			%%%%%%%%%%%%%%%%%%%%%%%%%%%%%%%%%%%%%%%%%%
			%%%%%%%%%%%%%%%%%%%%%%%%%%%%%%%%%%%%%%%%%%
			%%%%%%%%%%%%%%%%%%%%%%%%%%%%%%%%%%%%%%%%%%
			\item \textbf{Second method}. Let $\a \in \mathbb{R}^+$.
			Then $L+\a I$ is a nonsingular e-matrix with spectrum $\{\a, \l_2+\a, \l_3+\a, \dots, \l_n+\a\}$. By applying Corollary \ref{cc2} to its inverse matrix which is also an e-matrix, we get
			$$\frac{1}{\l_2+\a} \leq \sqrt[k]{\tau_p\big((L+\a I)^{-k}\big)},
			$$
			from which we obtain
			\begin{equation}\label{F25}
			\l_2 \geq \frac{1}{\sqrt[k]{\tau_p((L+\a I)^{-k})}} -\a, ~~~ k=1, 2, \dots,
			\end{equation}
			which implies that
			\begin{equation}\label{F34}
			\l_2 \geq \underset{\a \in \mathbb{R}^+}{\sup}\Big\{\frac{1}{\sqrt[k]{\tau_p((L+\a I)^{-k})}} -\a \Big\}, ~~~ k=1, 2, \dots,
			\end{equation}
			\begin{example}
				As an application of (\ref{F25}), we use the same Laplacian $L$ as in Example \ref{e6} and we choose $\a = 0.1$. Then
				$$  L + 0.1 I \,= \, \begin{bmatrix}
				3.1    &   -1   &   -1     &  -1    \\
				-1        & 2.1  & \ \ 0    &  -1    \\
				-1        & \ \ 0  & 1.1    &\ \ 0    \\
				-1        &   -1   & \ \ 0    &2.1    \\
				\end{bmatrix}
				$$
				and $(L + 0.1 I)^{-1}$ is given to the second decimal digit by
				$$  (L + 0.1 I)^{-1} \, \approx \, \begin{bmatrix}
				2.68    &  2.44   &   2.44    & 2.44    \\
				2.44    &  2.83   &   2.22    & 2.51   \\
				2.44    &  2.22   &   3.13    & 2.22    \\
				2.44    &  2.51   &   2.22    & 2.83    \\
				\end{bmatrix}.
				$$
				Using $\tau_1$, we have $\tau_1((L + 0.1 I)^{-1}) \approx 0.91$ ~and~ $$1=\l_2 \geq \frac{1}{\tau_1((L + 0.1 I)^{-1})}-0.1 \approx 1.$$
				Using $\tau_{\infty}$, we have  $\tau_{\infty}((L + 0.1 I)^{-1}) \approx 1.13$ ~and~ $$1=\l_2 \geq \frac{1}{\tau_{\infty}((L + 0.1 I	)^{-1})}-0.1 \approx 0.78.$$				
			\end{example} 
		\end{enumerate}	
	\end{subsection}
	%%%%%%%%%%%%%%%%%%%%%%%%%%%%%%%%%%%%%%%%%%%%%%%%%%%%%%%%
	%%%%%%%%%%%%%%%%%%%%%%%%%%%%%%%%%%%%%%%%%%%%%%%%%%%%%%%%
	%%%%%%%%%%%%%%%%%%%%%%%%%%%%%%%%%%%%%%%%%%%%%%%%%%%%%%%%
	%%%%%%%%%%%%%%%%%%%%%%%%%%%%%%%%%%%%%%%%%%%%%%%%%%%%%%%%
		\begin{subsection}{Comparison of $\tau_{1}$ and $\tau_{\infty}$ for Laplacian matrices}
		It is known that $\tau_{1} = \tau_{\infty}$ for all $2\times2$ and $3\times3$ stochastic matrices \cite{Se1}. This fact can be extended easily to all e-matrices. For higher dimensions, there are cases where $\tau_{\infty}< \tau_{1}$ such as the matrix $A$ in Example \ref{e4} and there are cases where $\tau_{1} < \tau_{\infty}$ such as the matrix $(L+J)^{-1}$ in Example \ref{e6}.
		For both of the $7\times7$ and $4\times4$ Laplacian matrices in, respectively, Example \ref{ee3} and Example \ref{e6}, we have $\tau_1 = \tau_{\infty}$. This equality is not a must for all Laplacian matrices as shown by the following example. 
		\begin{example}\label{e7} Let
			$$L  \, = \,  \begin{bmatrix}
			\ \ 3   &   -1   &   -1   & -1    &\ \ 0     &\ \ 0    \\
			-1      & \ \ 1  &\ \ 0   & \ \ 0 &\ \ 0  &\ \ 0    \\
			-1      &\ \ 0   &\ \ 1   & \ \ 0 &\ \  0 &\ \ 0    \\
			-1      &\ \ 0   &\ \ 0   &\ \ 2  & -1    &\ \ 0    \\
			\ \ 0   &\ \ 0   &\ \ 0   &  -1   &\ \ 2  &   -1    \\
			\ \ 0   &\ \ 0   &\ \ 0   &\ \ 0  & -1    &\ \ 1    \\
			\end{bmatrix}\\.
			$$
			The spectral radius of $L$ is $\l \approx 4.21 < \tau_1(L) =5 <\tau_{\infty}(L) =6$.
		\end{example}
		In \cite[Theorem 3.8]{HM4}, we have obtained the following result.
		Let $G$ be a simple graph on $n$ vertices and let $L =[l_{ij}]$ be its Laplacian matrix. Let $d$ be the largest degree among the vertices of $G$.
		Then
		\begin{equation}\label{F36}
		\r(L) = n, \text{~~if~~} d \geq \frac{n}{2} \text{~~~and~~~} \r(L) =2d, \text{~~if~~} d < \frac{n}{2}.
		\end{equation}
		By Theorem \ref{t1}, $\r(L)$ is the same as $\tau_{\infty}(L)$. Likewise, we can obtain a nice formula for $\tau_{1}(L)$ as follows.
		We use, for this purpose, the second equation of (\ref{F29}) according to which
		\begin{equation}\label{F37}
		\tau_{1}(L) = -\underset{i,j}{\min}\sum_{k=1}^n \min\{l_{ik},l_{jk}\}.
		\end{equation}
		Let $\{v_1, v_2, \dots, v_n\}$ be the set of vertices of $G$. For $i=1, 2, \dots, n$, let $d_i$ be the degree of the vertex $v_i$ and let $N_i$ be the set its neighbors. For fixed $i, j$ and $k$,
		observe that if any of $v_i$ and $v_j$ is connected to $v_k$, then $\min\{l_{ik},l_{jk}\}=-1$. Otherwise $\min\{l_{ik},l_{jk}\}=0$.
		It follows that
		$ \sum_{k=1}^n \min\{l_{ik},l_{jk}\} = -\big( d_i+d_j-|N_i \cap N_j| \big)$, where
		$|N_i \cap N_j|$ is the number of common neighbors of $v_i$ and $v_j$. Then we use (\ref{F37}) to obtain
		\begin{align*}
		\tau_1(L) =& -\underset{i,j}{\min}\Big\{-\big( d_i+d_j-|N_i \cap N_j| \big)\Big\}\\
		=&~~~~ \underset{i,j}{\max} \Big\{ d_i+d_j-|N_i \cap N_j| \Big\} .
		\end{align*} 			
		\begin{theorem}\label{t8}
			Let $G$ be a simple graph with vertices $\{v_1, v_2, \dots, v_n\}$ and let $L$ be its Laplacian matrix. For $i=1, 2, \dots, n$, let $d_i$ be the degree of the vertex  $v_i$ and let $N_i$ be the set of neighbors of the vertex $v_i$. Then
			\begin{equation}\label{F38}
			\tau_1(L) = \underset{i,j}{\max} \Big\{ d_i+d_j - |N_i \cap N_j|\Big\}.
			\end{equation}
		\end{theorem}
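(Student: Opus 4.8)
The plan is to reduce the statement to the second explicit expression for $\tau_1$ on e-matrices given in Theorem \ref{t1} and then read off the minimum combinatorially. Since the Laplacian $L$ is an e-matrix whose constant row-sum, and hence trivial eigenvalue, is $\l_L = 0$, the right-hand equality in (\ref{F29}) specializes to
\begin{equation*}
\tau_1(L) = -\underset{i,j}{\min}\sum_{k=1}^n \min\{l_{ik}, l_{jk}\},
\end{equation*}
so the entire problem becomes the evaluation of the inner sum $\sum_{k=1}^n \min\{l_{ik}, l_{jk}\}$ for each pair $(i,j)$ in graph-theoretic language.

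First I would fix a pair of \emph{distinct} indices $i \neq j$ and classify each $k$ according to the value of $\min\{l_{ik}, l_{jk}\}$, using that every off-diagonal Laplacian entry is $-1$ on an edge and $0$ otherwise, while each diagonal entry $l_{kk} = d_k$ is nonnegative. The claim to establish is that $\min\{l_{ik}, l_{jk}\} = -1$ exactly when $v_k$ is adjacent to at least one of $v_i, v_j$, and equals $0$ otherwise; summing then counts $|N_i \cup N_j|$ indices, so by inclusion--exclusion
\begin{equation*}
\sum_{k=1}^n \min\{l_{ik}, l_{jk}\} = -\,|N_i \cup N_j| = -\big(d_i + d_j - |N_i \cap N_j|\big).
\end{equation*}
Substituting into the previous display turns $-\min$ into $\max$ and yields (\ref{F38}). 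The diagonal pairs $i = j$ contribute inner sum $\sum_k l_{ik} = 0$ and so add only the value $0$ to the maximum; since a maximum-degree vertex together with any of its neighbors already produces a value at least $\Delta$, these diagonal pairs are dominated and the maximum over all pairs coincides with the maximum over distinct pairs.

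The one step that genuinely needs care --- and the only real obstacle --- is verifying the adjacency characterization at the two indices $k \in \{i, j\}$, where one of the compared entries is a nonnegative diagonal degree rather than an off-diagonal $0$ or $-1$. At $k = i$, for instance, $\min\{l_{ii}, l_{ji}\} = \min\{d_i, l_{ji}\}$; because $d_i \geq 0$ and the graph has no self-loops, this equals $-1$ precisely when $l_{ji} = -1$, that is, when $v_j \sim v_i$, which is exactly the condition for $v_i \in N_i \cup N_j$. Hence the nonnegative diagonal never manufactures a spurious $-1$, and the clean ``adjacent to $v_i$ or $v_j$'' count survives at $k = i$ and, symmetrically, at $k = j$. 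Once this bookkeeping is confirmed, the inclusion--exclusion identity and the passage from minimum to maximum are routine.
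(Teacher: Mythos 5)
Your proposal is correct and follows essentially the same route as the paper: specialize the second expression in (\ref{F29}) to $\l_L=0$, show that $\min\{l_{ik},l_{jk}\}=-1$ exactly when $v_k$ is adjacent to $v_i$ or $v_j$, and conclude by inclusion--exclusion that the inner sum equals $-\big(d_i+d_j-|N_i\cap N_j|\big)$. You are in fact somewhat more careful than the paper, which silently passes over the indices $k\in\{i,j\}$ where a diagonal entry enters the minimum, and over the diagonal pairs $i=j$ (whose inner sum is the row sum $0$ rather than $-d_i$); your explicit verification of both points closes these small gaps without changing the argument.
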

		Since $\l_n \leq \tau_1(L)$, it follows that
		\begin{equation}\label{F39}
		\l_n \leq \underset{i,j}{\max} \Big\{ d_i+d_j - |N_i \cap N_j|\Big\}.
		\end{equation}
		What is surprising is that the bound on $\l_n$ given by (\ref{F39}) was obtained in a $2000$ note \cite{RHR} by O. Rojo, R. Soto and H. Rojo without involving $\tau_1$ or anything  else about ergodicity coefficients. 
		In $2003$, this bound was improved by Kch Das as follows \cite[Theorem 2.1]{D}:
		\begin{equation}\label{F40}
		\l_n \leq \max \Big\{ d_i+d_j - |N_i \cap N_j|: v_iv_j \text{~is an edge in~} G\Big\}.
		\end{equation}
		Of course, all these bounds can by improved by applying Corollary \ref{cc2} to higher powers of $L$ as we did for the Laplacian matrix in Example \ref{ee3}.
		We believe that all the bounds existing in the literature are surpassed by $\sqrt{\tau_{1}(L^2)}$ or at most by $\sqrt[3]{\tau_1(L^3)}$ for the Laplacian matrix $L$ of every connected simple graph. This deserves further investigation. 
		\begin{example}
			Considering the same Laplacian matrix $L$ as in Example \ref{e7}, the spectral radius is $\l_6 \approx 4.21$.
			The bound given by (\ref{F40}) is the same as $\tau_1(L) = 5$. Applying Corollary \ref{cc2} to the  e-matrix $L^2$, we have
			$$\sqrt{\tau_1(L^2)} \approx 4.69.$$
		\end{example}
		
		To compare $\tau_1$ to $\tau_{\infty}$ for Laplacian matrices of simple graphs, we use their explicit forms given, respectively, by (\ref{F38}) and (\ref{F36}). This leads to the following result.
		\begin{theorem}\label{t10}
			Let $L$ be the Laplacian matrix of a simple graph. Then
			\begin{equation}
				\tau_1(L) \leq \tau_{\infty}(L).
			\end{equation}		
		\end{theorem}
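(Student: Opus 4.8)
The plan is to compare the two explicit formulas already established. Theorem \ref{t8} gives $\tau_1(L) = \max_{i,j}\{d_i + d_j - |N_i \cap N_j|\}$, while Theorem \ref{t1} together with (\ref{F36}) gives $\tau_{\infty}(L) = \hat{\rho}(L)$, which equals $n$ when $d \geq \frac{n}{2}$ and $2d$ when $d < \frac{n}{2}$, where $d$ is the maximum degree of $G$. The first observation I would make is that these two cases unify into the single formula $\tau_{\infty}(L) = \min\{n, 2d\}$: indeed $d \geq \frac{n}{2} \iff 2d \geq n$, so in the first case $\min\{n,2d\} = n$ and in the second $\min\{n,2d\} = 2d$. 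Thus it suffices to prove the two inequalities $\tau_1(L) \leq n$ and $\tau_1(L) \leq 2d$ simultaneously.

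Next I would bound the quantity $d_i + d_j - |N_i \cap N_j|$ for an arbitrary pair $i,j$ in two independent ways. Since $d_i = |N_i|$ and $d_j = |N_j|$, inclusion--exclusion gives $d_i + d_j - |N_i \cap N_j| = |N_i| + |N_j| - |N_i \cap N_j| = |N_i \cup N_j|$. Because $N_i \cup N_j$ is a subset of the $n$-element vertex set, this quantity is at most $n$; taking the maximum over all $i,j$ yields $\tau_1(L) \leq n$. On the other hand, since every vertex degree is at most $d$, we have $d_i + d_j - |N_i \cap N_j| \leq d_i + d_j \leq 2d$, and taking the maximum gives $\tau_1(L) \leq 2d$.

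Combining the two bounds gives $\tau_1(L) \leq \min\{n, 2d\} = \tau_{\infty}(L)$, which is the desired inequality. The argument is elementary once the explicit forms are in hand; the only real insight is the inclusion--exclusion identity $d_i + d_j - |N_i \cap N_j| = |N_i \cup N_j|$, which converts the combinatorial expression for $\tau_1$ into a cardinality automatically bounded by $n$, combined with the reformulation of $\hat{\rho}(L)$ as $\min\{n,2d\}$. I do not anticipate a genuine obstacle here; the one point worth checking carefully is the edge convention, namely that no vertex is its own neighbor, so that $N_i \cup N_j$ genuinely lies inside the full vertex set and the clean bound $|N_i \cup N_j| \leq n$ is valid.
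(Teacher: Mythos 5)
Your proof is correct and follows essentially the same route as the paper: both compare the explicit formula $\tau_1(L)=\max_{i,j}\{d_i+d_j-|N_i\cap N_j|\}$ against $\tau_{\infty}(L)$ via the two cases $d\geq \frac{n}{2}$ (using $\tau_1(L)\leq n$) and $d<\frac{n}{2}$ (using $d_i+d_j\leq 2d$), which is exactly your $\min\{n,2d\}$ repackaging. Your inclusion--exclusion identity $d_i+d_j-|N_i\cap N_j|=|N_i\cup N_j|$ is a slightly cleaner justification of the bound $\tau_1(L)\leq n$ that the paper merely asserts, but the argument is the same.
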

		\begin{proof}
			Observe that $d_i+d_j - |N_i \cap N_j|\leq n$ for $i, j \in \{1, 2, \dots, n\}$.
			By (\ref{F38}), it follows that
			\begin{equation}\label{F41}
			\tau_1(L) \leq n.
			\end{equation}  					
			If $d\geq \frac{n}{2}$, then by (\ref{F36}) and (\ref{F41}), $\tau_1(L) \leq \tau_{\infty}(L)$.\\
			If $d < \frac{n}{2}$, then by (\ref{F36}), $\tau_{\infty}(L) = 2d$ so that $ \tau_{\infty}(L) \geq d_i+d_j - |N_i \cap N_j|$ for all $i, j \in \{1, 2, \dots, n\}$. Thus, $\tau_{\infty}(L) \geq \tau_1(L)$.
		\end{proof}	
		
		\begin{cor}\label{c8}
			\item Let $G$ be a connected simple graph with $n$ vertices and let $L$ be its Laplacian matrix.
			If $n\leq 5$, then $\tau_1(L) = \tau_{\infty}(L)$.		
		\end{cor}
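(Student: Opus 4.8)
The plan is to leverage Theorem \ref{t10}, which already supplies $\tau_1(L)\le\tau_\infty(L)$ for every Laplacian matrix, so that only the reverse inequality $\tau_1(L)\ge\tau_\infty(L)$ remains to be proved for connected $G$ with $n\le 5$. The first step I would take is to rewrite the explicit formula (\ref{F38}) in the suggestive form $\tau_1(L)=\max_{i\ne j}|N_i\cup N_j|$, using the set identity $d_i+d_j-|N_i\cap N_j|=|N_i\cup N_j|$. This recasts the whole problem as a combinatorial covering question about neighborhoods, to be compared against the value of $\tau_\infty(L)$ supplied by (\ref{F36}). I then split into the two regimes that (\ref{F36}) dictates, according to whether the maximum degree $d$ satisfies $d\ge n/2$ or $d<n/2$.

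In the regime $d\ge n/2$ we have $\tau_\infty(L)=n$, so, since $\tau_1(L)\le n$ by (\ref{F41}), it suffices to exhibit two distinct vertices whose neighborhoods cover all of $V$, forcing $\tau_1(L)=n$. The idea is to begin with a vertex $v$ of maximum degree $d$ and examine its set of non-neighbors $U=V\setminus N_v$, which contains $v$ and has size $|U|=n-d$. Because $d\ge n/2$ and $n\le 5$, a short check gives $|U|\le 2$, which is the crucial bound. If $|U|=1$, then $N_v=V\setminus\{v\}$, and any neighbor $w$ of $v$ (one exists by connectivity) gives $N_v\cup N_w=V$. If $|U|=2$, say $U=\{v,u\}$ with $u\not\sim v$, then connectivity forces $u$ to have a neighbor $w$; since $u\not\sim v$ this $w$ must lie in $N_v$, so $w$ is a common neighbor of $u$ and $v$ and again $N_v\cup N_w=V$. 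This handles every connected graph on at most five vertices with $d\ge n/2$.

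The remaining regime $d<n/2$ is where this clean covering argument breaks down, and I expect it to be the main obstacle: here $\tau_\infty(L)=2d$, and the target $\tau_1(L)\ge 2d$ is no longer equivalent to covering $V$. The saving observation is a classification: for $n\le 5$ the inequality $d<n/2$ can occur only when $n=5$ and $d=2$, and a connected graph with all degrees at most $2$ is a single path or a single cycle, so $G$ is either $P_5$ or $C_5$. For these two graphs I would simply evaluate $\max_{i\ne j}|N_i\cup N_j|$ directly and confirm $\tau_1(L)=4=2d=\tau_\infty(L)$ in each case (for instance, the two adjacent interior vertices of $P_5$, or any two adjacent vertices of $C_5$, have neighborhoods whose union has size $4$). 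Combining the two regimes yields $\tau_1(L)\ge\tau_\infty(L)$, and together with Theorem \ref{t10} this gives the asserted equality. The only delicate points to get right are the bookkeeping $|U|=n-d\le 2$ in the first regime and the enumeration of the low-degree exceptional graphs in the second; the rest is routine.
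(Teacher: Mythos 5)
Your proof is correct, and it takes a genuinely different route from the paper's: the paper's entire proof of Corollary \ref{c8} is the single sentence that the assertion ``is easily checked case by case,'' i.e.\ an (implicit) exhaustive verification over all connected graphs on at most five vertices, with no argument recorded. You instead give a structural proof: starting from Theorem \ref{t10} you reduce to the reverse inequality, recast (\ref{F38}) via inclusion--exclusion as $\tau_1(L)=\max_{i\neq j}|N_i\cup N_j|$ (valid since $|N_i|=d_i$), and split along the two regimes of (\ref{F36}). In the regime $d\geq n/2$ your covering argument is sound: $|V\setminus N_v|=n-d\leq n/2\leq 5/2$ forces $|U|\leq 2$, and in both subcases connectivity produces a vertex $w\in N_v$ with $N_v\cup N_w=V$, whence $\tau_1(L)=n=\tau_\infty(L)$ by (\ref{F41}). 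In the regime $d<n/2$ your classification is also right: connectivity rules out $d=1$ for $n\geq 3$, so (for $n\geq 2$) only $n=5$, $d=2$ survives, giving $G\in\{P_5,C_5\}$, and your exhibited adjacent pairs indeed have $|N_i\cup N_j|=4=2d$. What your route buys is considerable: it makes transparent exactly where $n\leq 5$ enters (it forces $n-d\leq 2$ in the first regime and $d\leq 2$ in the second), and your first-regime argument actually proves the stronger reusable fact that $\tau_1(L)=n$ for every connected graph with $d\geq n-2$, for all $n$; the paper's case check is shorter to state but opaque and yields no such byproduct. The only (pedantic) omission is that your regime classification skips the one-vertex graph, where $d=0<n/2$ but $G$ is neither a path on five vertices nor a cycle; there $L=0$ and $\tau_1(L)=\tau_\infty(L)=0$ trivially, so the proof is complete once this degenerate case is noted.
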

		\begin{proof}
			The assertion of the corollary is easily checked case by case.
		\end{proof}
		\begin{rem}
			Corollary \ref{c8} does not apply to disconnected graphs. For example,		
			the following $4 \times 4$ Laplacian matrix $L$ is associated with a disconnected simple graph.
			$$  L \,= \, \begin{bmatrix}
			~~~2      &   -1   &   -1    &  0    \\
			-1        & ~~~2   &   -1    &  0    \\
			-1        &   -1   & ~~~2    &  0    \\
			~~~0      &  ~~~0  & ~~~0    &  0    \\
			\end{bmatrix}.
			$$
			We have $\tau_1(L) = 3 < \tau_{\infty}(L)=4$.
		\end{rem}
	\end{subsection}
	\end{section}
	\begin{section}{Conclusion}
		In this work,  $\tau_1$ and $\tau_{\infty}$ are used by the means of 
		Corollary \ref{cc2}  to give upper-bounds on the largest non-trivial eigenvalue in absolute value of e-matrices. However, Corollary \ref{cc2} and Corollary \ref{c6} allow for better than that. Theoretically speaking, they are algorithms to approximate the absolute value of this eigenvalue and Theorem \ref{t5} ensures their convergence. Questions about whether or not these algorithms are practical, cases where they may converge rapidly and how they can be improved may be the subject of further research.
	\end{section}	
	%
	%%%%%%%%%%%%%%%%%%%%%%%%%%%%%%%%%%%%%%%%%%%%%%%%%%%%%%%%
	%%%%%%%%%%%%%%%%%%%%%%%%%%%%%%%%%%%%%%%%%%%%%%%%%%%%%%%%
	%%%%%%%%%%%%%%%%%%%%%%%%%%%%%%%%%%%%%%%%%%%%%%%%%%%%%%%%
	%%%%%%%%%%%%%%%%%%%%%%%%%%%%%%%%%%%%%%%%%%%%%%%%%%%%%%%%				
     
\end{document}